\theoremstyle{definition}
\newtheorem{theorem}{Theorem}[section]
\newtheorem{lemma}[theorem]{Lemma}
\newtheorem{corollary}[theorem]{Corollary}
\newtheorem{remark}[theorem]{Remark}
\numberwithin{equation}{section}
\newtheorem{problem}{Problem}
\newtheorem{acknowledgements}{Acknowledgement}
\def\<{\left < }
\def\>{\right >}
\def\({\left ( }
\def\){\right )}
\def\C2{${\bf C}^2$}
\begin{document}

\title[ First Chen Ineq. Gen.Warp.Prod.Subman. of Riemann Sp.Form - App.  ]{ First Chen Inequality for General Warped Product Submanifolds of a Riemannian Space Form and Applications} 

\author[A. Mustafa]{Abdulqader MUSTAFA}
\address{Department of Mathematics, Faculty of Arts and Science, Palestine Technical University, Kadoorei, Tulkarm, Palestine}
\email{abdulqader.mustafa@ptuk.edu.ps}
\author[C. \"Ozel]{Cenap \"OZEL}
\address{Department of Mathematics, Faculty of Science, King Abdulaziz University, 21589 Jeddah, Saudi Arabia}
\email{cozel@kau.edu.sa}
\author[A. Pigazzini]{Alexander PIGAZZINI}
\address{Mathematical and Physical Science Foundation, 4200 Slagelse, Denmark}
\email{pigazzini@topositus.com}
\author[R. Kaur]{Ramandeep KAUR}
\address{Department of Mathematics and Statistics, Central University of Punjab, Bathinda, Punjab-151 401, India}
\email{ramanaulakh1966@gmail.com}
\author[G. Shanker]{Gauree SHANKER}
\address{Department of Mathematics and Statistics, Central University of Punjab, Bathinda, Punjab-151 401, India}
\email{gauree.shanker@cup.edu.in}

\maketitle
\begin{abstract}
  In this paper, the first Chen inequality is proved for general warped product submanifolds in Riemannian space forms, this inequality involves  intrinsic invariants ($\delta$-invariant and sectional curvature) controlled by an extrinsic one (the mean curvature vector), which provides an answer for Problem \ref{prob3}. As a geometric application, this inequality is applied to derive a necessary condition for the immersed submanifold to be minimal in Riemannian space forms, which presents a partial answer for the well-known problem proposed by S.S. Chern, Problem \ref{prob6}. For further research directions, we address a couple of open problems; namely Problem \ref{ama1} and Problem \ref{pqm2}.\\

\noindent{\it{AMS Subject Classification (2010)}}: {53C15; 53C40; 53C42; 53B25}

\noindent{\it{Keywords}}: { Mean curvature vector;          
$\delta$-invariant; scalar curvature; warped products;minimal submanifolds; Riemannian space forms }

\end{abstract}


\sloppy
\section{Introduction}
Warped products have been playing some important roles in the theory of general relativity as they have been providing the best mathematical models of our universe for now; that is, the warped product scheme was successfully applied in general relativity and semi-Riemannian geometry in order to build basic cosmological models for the universe. For instance, the Robertson-Walker spacetime, the Friedmann cosmological models and the standard static spacetime are given as warped product manifolds. For more cosmological applications, warped product manifolds provide excellent setting to model spacetime near black holes or bodies with large gravitational force. For example, the relativistic model of the Schwarzschild spacetime that describes the outer space around a massive star or a black hole admits a warped product construction \cite{iijj77}. For this reason, in the years, many geometric aspects of the various types of warped product submanifods have been studied in various ambient spaces, (among others, see for example \cite{AKU17}, \cite{ffkk0}, \cite{2211gg}, \cite{CU}, \cite{aassll}, \cite{GenIneq}, \cite{abd}).

 Extrinsic and intrinsic Riemannian invariants have vast applications in many fields of science other than differential geometry. In particular, they are of considerable significance in general relativity \cite{99mmnn}. Classically, among extrinsic invariants, the shape operator and the squared mean curvature are the most important ones. Among the main intrinsic invariants, sectional, Ricci and scalar curvatures are the well-known ones, as well as $\delta$ invariant. So, based on Nash embedding theorem (see \cite{22vvuu}, \cite{88jj99}), our research programs is to search for control of extrinsic quantities in relation to intrinsic quantities of Riemannian manifolds via Nash's Theorem and to search for their applications \cite{2233ee}, \cite{55kk99}. Since it is an inevitable motivation, this was quite enough for Chen to address the following research problem:

\begin{problem}\label{prob3} \cite{aallr4}.
Establish simple relationships between the main extrinsic invariants and the main intrinsic invariants of a submanifold.
\end{problem}
 
Several famous results in differential geometry, such as isoperemetric inequality, Chern-Lashof's inequality and Gauss-Bonnet's theorem among others can be regarded as results in this respect. The current paper aims to continue this sequel of inequalities.   

It is well-known that \cite{66449d} the following two conditions are necessary for the immersion to be minimal in the Euclidean space $\mathbb{E}^m$:

\noindent{\bf Condition 1:} If $\varphi: M^n\rightarrow \mathbb{E}^m$ is a minimal immersion from a manifold of positive dimension into a Euclidean $m$-space, then $M^n$ is non-compact.

\noindent{\bf Condition 2:} If $\varphi: M^n\rightarrow \mathbb{E}^m$ is a minimal immersion from a manifold of positive dimension into a Euclidean $m$-space, then the Ricci tensor of $M^n$ is negative semi-definite.

S. S. Chern asked on page 13 of \cite{8822cc} to search for further necessary conditions on the Riemannian metric of a submanifold $M^n$ in order to admit an isometric minimal immersion into a Euclidean space $\mathbb{E}^m$. Later on, Chen materialized this goal for warped product submanifolds as the following:
\begin{problem}\label{prob6} \cite{2233ee}. Given a warped product $N_1\times _fN_2$, what are the necessary conditions for the warped product to admit a minimal isometric immersion in a Euclidean $m$-space $\mathbb{E}^m$ (or $\tilde M^m(c))?$
\end{problem}

In \cite{66449d}, Chen initiated a significant inequality in terms of the intrinsic invariant $\delta$-invariant. This celebrated inequality drew attention of several authors \cite{55kk99}. Motivated by the result of Chen, we construct a new general inequality in terms of $\delta$-invariants for general warped product submanifolds of Riemannian space forms.

We recall the following algebraic key lemma from \cite{66449d}
\begin{lemma}\label{48}
Let $\alpha_1, \alpha_2, \cdots, \alpha_n, \beta$ be $(n+1)~(n\ge 2)$ real numbers such that
$$ (\sum_{i=1}^n \alpha_i)^2= (n-1)  (\sum_{i=1}^n \alpha_i^2+\beta).$$
Then $2\alpha_1\alpha_2\ge \beta,$ with equality holds if and only if $\alpha_1+\alpha_2=\alpha_3=\cdots=\alpha_n$.
\end{lemma}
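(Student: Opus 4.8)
The plan is to treat the hypothesis as a formula for $\beta$ and to reduce the claimed inequality to a single application of the Cauchy--Schwarz inequality. First I would solve the constraint for $\beta$, obtaining
$$\beta = \frac{1}{n-1}\left(\sum_{i=1}^n \alpha_i\right)^{2} - \sum_{i=1}^n \alpha_i^{2},$$
so that the target inequality $2\alpha_1\alpha_2 \ge \beta$ is, after transferring the term $\sum_{i=1}^n \alpha_i^2$ to the left, equivalent to
$$2\alpha_1\alpha_2 + \sum_{i=1}^n \alpha_i^{2} \;\ge\; \frac{1}{n-1}\left(\sum_{i=1}^n \alpha_i\right)^{2}.$$
This rewriting removes $\beta$ entirely and turns the statement into a pure inequality among $\alpha_1,\dots,\alpha_n$.

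Next I would exploit the elementary identity $2\alpha_1\alpha_2 + \alpha_1^2 + \alpha_2^2 = (\alpha_1+\alpha_2)^2$ to regroup the left-hand side as $(\alpha_1+\alpha_2)^2 + \sum_{i=3}^n \alpha_i^2$. The key observation is that, upon setting $y_1 = \alpha_1+\alpha_2$ and $y_k = \alpha_{k+1}$ for $2 \le k \le n-1$, one obtains exactly $n-1$ real numbers for which $\sum_{k=1}^{n-1} y_k = \sum_{i=1}^n \alpha_i$ and $\sum_{k=1}^{n-1} y_k^2 = (\alpha_1+\alpha_2)^2 + \sum_{i=3}^n \alpha_i^2$. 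The inequality to be established then reads
$$\sum_{k=1}^{n-1} y_k^{2} \;\ge\; \frac{1}{n-1}\left(\sum_{k=1}^{n-1} y_k\right)^{2},$$
which is precisely the Cauchy--Schwarz inequality (equivalently the quadratic-mean/arithmetic-mean inequality) applied to the $n-1$ numbers $y_1,\dots,y_{n-1}$ against the all-ones vector, and hence holds unconditionally for $n \ge 2$.

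For the equality discussion I would simply invoke the equality case of Cauchy--Schwarz: equality holds if and only if $y_1 = y_2 = \cdots = y_{n-1}$, that is, $\alpha_1+\alpha_2 = \alpha_3 = \cdots = \alpha_n$, which is exactly the stated condition. The only delicate point, and the step I would flag as the crux, is the regrouping that fuses $\alpha_1$ and $\alpha_2$ into the single quantity $\alpha_1+\alpha_2$ so that exactly $n-1$ entries remain; once this substitution is spotted the problem collapses onto one named inequality together with its equality characterization, and no case analysis or optimization via Lagrange multipliers is required. I therefore anticipate no genuine obstacle, since the hypothesis is linear-algebraic and the conclusion is a direct consequence of a standard convexity estimate.
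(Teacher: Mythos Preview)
Your argument is correct. Solving the constraint for $\beta$, regrouping $2\alpha_1\alpha_2+\alpha_1^2+\alpha_2^2=(\alpha_1+\alpha_2)^2$, and then applying Cauchy--Schwarz to the $n-1$ numbers $y_1=\alpha_1+\alpha_2,\,y_2=\alpha_3,\dots,y_{n-1}=\alpha_n$ gives exactly the desired inequality, and the equality case of Cauchy--Schwarz yields the stated characterization $\alpha_1+\alpha_2=\alpha_3=\cdots=\alpha_n$. There is nothing missing.

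As for comparison: the paper does not actually prove this lemma. It is quoted verbatim from Chen's original article \cite{66449d} and used as a black box in the proof of Theorem~\ref{315}. So your write-up supplies a self-contained justification where the paper offers none. Your approach is in fact the standard one (and is essentially how Chen proved it originally): the whole content of the lemma is the observation that merging $\alpha_1,\alpha_2$ into the single entry $\alpha_1+\alpha_2$ reduces the count from $n$ to $n-1$, after which the hypothesis becomes the equality case of the power-mean inequality and the conclusion becomes its generic case.
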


Also, we recall the following definition of the {\it Chen first invariant}, which is the main intrinsic invariant in our inequality
\begin{equation}\label{kk3}
\delta_{\tilde M^m}(x)=\tilde \tau (T_x\tilde M^m)-\inf \{\tilde K(\pi) : \pi \subset T_x\tilde M^m, x\in \tilde M^m, \dim \pi =2\}.
\end{equation}

The following  theorem was firstly proved for Riemannian submanifolds in real space forms by Chen in \cite{66449d}, it is known nowadays as the Chen first inequality. Ever since Chen published it, it has been extended for Riemannian submanifolds in various ambient manifolds \cite{ssee44}. By contrast, it was not proved for warped product submanifold in any ambient manifold. Indeed, for example in \cite{6677bb}, Chen establishes a general inequality for warped products in real space forms, but not addressing it in terms of his First Inequality.
\\
Thus, this work is devoted to prove it in the setting of warped product submanifolds.

The Chen first inequality was first proved in this form:
\begin{theorem}\label{pinc}
Let $M^n$ be an $n$-dimensional $(n\ge 2)$ submanifold of a Riemannian manifold $\tilde M^m(c)$ of constant sectional curvature $c$. Then 
\begin{equation}\label{100i3}
\inf K\ge \frac{1}{2}\biggl\{ \tau(T_xM^n) -\frac{n^2(n-2)}{n-1} ||\vec H||^2 -(n+1)(n-2)c\biggr\},
\end{equation}
where $K$ and $\tau(T_xM^n)$ are the sectional curvature and the scalar curvature of $M^n$, respectively, $x\in M^n$.

Equality holds if and only if, with respect to suitable orthonormal frame fields $e_1, \cdots, e_n, e_{n+1}, \cdots, e_m,$ the shape operators of $M^n$ in $\tilde M^m(c)$ take the following forms:
$$A_{e_{n+1}} =
 \begin{pmatrix}
 \mu_1 & 0&0 & \cdots &0 \\
0&\mu_2&0&\cdots &0\\
   0 &0 &\mu & \cdots &0\\
   \vdots & \vdots&\vdots & \ddots&\vdots\\

   0 &0& 0 &0 &\mu
  
 \end{pmatrix},~~~ \mu=\mu_1+\mu_2,
$$
$$A_{e_{r}} =
 \begin{pmatrix}
 h_{11}^r & h_{12}^r&0 & \cdots &0 \\
h_{12}^r&-h_{11}^r&0&\cdots &0\\
   0 &0 &0 & \cdots &0\\
   \vdots & \vdots&\vdots & \ddots&\vdots\\

   0 &0& 0 &0 &0
 \end{pmatrix},~~r=n+2,\cdots, m.
$$
\end{theorem}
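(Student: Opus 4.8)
The plan is to establish the inequality pointwise at an arbitrary $x\in M^n$ by combining the Gauss equation of the isometric immersion $M^n\hookrightarrow\tilde M^m(c)$ with the algebraic Lemma~\ref{48}. First I would fix an adapted orthonormal frame: choose $e_1,\dots,e_n$ spanning $T_xM^n$ so that the $2$-plane $\pi=\mathrm{span}\{e_1,e_2\}$ realizes $\inf K$ at $x$, take $e_{n+1}$ parallel to the mean curvature vector $\vec H$, and complete by an orthonormal basis $e_{n+2},\dots,e_m$ of the normal space. Writing $h_{ij}^r=\langle h(e_i,e_j),e_r\rangle$ for the components of the second fundamental form, the choice $e_{n+1}\parallel\vec H$ gives $\sum_{i=1}^n h_{ii}^{\,n+1}=n\|\vec H\|$ and $\sum_{i=1}^n h_{ii}^{\,r}=0$ for every $r\ge n+2$.

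From the Gauss equation of the space form I would next record the two identities the argument rests on: the sectional curvature of the chosen plane,
$$K(\pi)=c+\sum_{r=n+1}^m\bigl(h_{11}^r h_{22}^r-(h_{12}^r)^2\bigr),$$
and, by tracing, the scalar curvature identity
$$\tau(T_xM^n)=n(n-1)c+n^2\|\vec H\|^2-\|h\|^2,\qquad \|h\|^2=\sum_{r=n+1}^m\sum_{i,j=1}^n (h_{ij}^r)^2.$$

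The heart of the proof is the lower bound for $K(\pi)$. Applying Lemma~\ref{48} to $\alpha_i=h_{ii}^{\,n+1}$ with $\beta=\tfrac{1}{n-1}\bigl(\sum_i\alpha_i\bigr)^2-\sum_i\alpha_i^2$ (chosen so that the hypothesis of the lemma holds identically) yields $2h_{11}^{\,n+1}h_{22}^{\,n+1}\ge\beta$. I would then insert this into the Gauss expression for $K(\pi)$, evaluate $\beta$ using $\sum_i h_{ii}^{\,n+1}=n\|\vec H\|$, and eliminate $\sum_i(h_{ii}^{\,n+1})^2$ through the scalar curvature identity. The decisive and most delicate step is the ensuing bookkeeping: one splits $\|h\|^2$ into the block from $e_{n+1}$ and the trace-free blocks from $e_{n+2},\dots,e_m$, and checks that, after discarding the manifestly non-negative remainders
$$\sum_{\substack{i<j\\(i,j)\ne(1,2)}}(h_{ij}^{\,n+1})^2,\qquad \sum_{r\ge n+2}\Bigl[\tfrac12(h_{11}^r+h_{22}^r)^2+\tfrac12\sum_{i\ge 3}(h_{ii}^r)^2+\!\!\sum_{\substack{i<j\\(i,j)\ne(1,2)}}\!\!(h_{ij}^r)^2\Bigr],$$
the surviving terms collapse to exactly $K(\pi)\ge\tfrac12\tau(T_xM^n)-\tfrac{n^2(n-2)}{2(n-1)}\|\vec H\|^2-\tfrac12(n+1)(n-2)c$, which is the asserted inequality since $K(\pi)=\inf K$. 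Matching every constant here is where the particular weight $\tfrac{n^2}{n-1}$ furnished by Lemma~\ref{48} is indispensable, and this is the main obstacle.

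Finally, for the equality case I would trace back the inequalities used. Equality forces, on the one hand, the equality case of Lemma~\ref{48}, namely $h_{11}^{\,n+1}+h_{22}^{\,n+1}=h_{33}^{\,n+1}=\cdots=h_{nn}^{\,n+1}$, and on the other the vanishing of the non-negative remainders above, that is $h_{ij}^{\,n+1}=0$ for all off-diagonal $(i,j)$ with $(i,j)\ne(1,2)$, together with $h_{11}^r+h_{22}^r=0$, $h_{ii}^r=0$ $(i\ge 3)$ and $h_{ij}^r=0$ $((i,j)\ne(1,2))$ for $r\ge n+2$. A final rotation of $e_1,e_2$ inside $\pi$, which preserves $\pi$, the direction of $\vec H$, and the trace-free form of the higher blocks, diagonalizes the restriction of $A_{e_{n+1}}$ to $\pi$; setting $\mu_1=h_{11}^{\,n+1}$, $\mu_2=h_{22}^{\,n+1}$ and $\mu=\mu_1+\mu_2$ then reproduces precisely the two shape-operator normal forms displayed in the statement.
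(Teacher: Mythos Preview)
Your proposal is correct and is essentially Chen's original argument; note, however, that the paper does not itself supply a proof of Theorem~\ref{pinc}---it is quoted from \cite{66449d} as background, and the paper's own contribution is the warped-product analogue, Theorem~\ref{315}. That said, the technique you outline (Gauss equation plus Lemma~\ref{48} applied to the diagonal entries $h_{ii}^{n+1}$ after aligning $e_{n+1}$ with $\vec H$, followed by discarding non-negative remainders) is exactly the machinery the paper deploys in its proof of Theorem~\ref{315}, so your write-up matches both Chen's original and the paper's method in spirit and in detail.
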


The present paper is organized as follows: After the introduction, we present in Section 2, the preliminaries, basic definitions and formulas. In Section 3, we prove four preparatory basic lemmas, which are necessary and useful to the next section. In Section 4, we consider  general warped products in Riemannian space form to prove a general inequality involving $\delta$-invariant and the mean curvature vector, as an answer to Problem \ref{prob3}. In Section 5, we provide solutions to the Chern's Problem, 'whether we can find other necessary conditions for an isometric immersion to be minimal or not', Corollaries \ref{con11}, \ref{con22}. In the last section, we address two open problems related to the obtained results in this paper. 

\section{Preliminaries}

 Let $\tilde M^m$ be a smooth Riemannian manifold. If we choose two linearly independent tangent vectors $X,~Y\in T_x\tilde M^m$, then the {\it sectional curvature} of the $2$-plane $\pi$ spanned by $X$ and $Y$ is given in terms of the Riemannian curvature tensor $\tilde R$ by
\begin{equation}\label{E1}
\tilde K(X\wedge Y)=\frac{\tilde g(\tilde R(X,Y)Y, X)}{\tilde g(X,X) \tilde g(Y,Y)- (\tilde g(X,Y))^2},
\end{equation}
 where $\tilde g$ is the Riemannina metric tensor furnished on $\tilde M^m$.

In case that the $2$-plane $\pi$ is spanned by orthogonal unit vectors $X$ and $Y$ from the tangent space $T_x\tilde M^m,~x\in \tilde M^m$, the previous definition may be written as 
\begin{equation}\label{28}
\tilde K(\pi)=\tilde K_{\tilde M^m}(X\wedge Y)=\tilde g(\tilde R(X, ~Y)Y,~X).
\end{equation}

It is worth pointing out that, $\tilde K(\pi)$ is independent of the choice of the orthonormal basis $\{X,~Y\}$ of $\pi$, and it determines the Riemannian curvature tensor $\tilde R$ completely (O'Neill, 1983). In addition, if $\tilde K(\pi)$ is constant for all planes $\pi$ in $T_x\tilde M^m$ and for all points $x\in \tilde M^m$, say $\tilde K(\pi)=c$, then we call $\tilde M^m(c)$ a {\it real space form}. In fact, space forms are regarded as the simplest important class of Riemannian manifolds. Denoting by $\tilde M^m(c)$ a real space form of constant sectional curvature $c$, the curvature tensor of $\tilde M^m(c)$ is expressed as 
\begin{equation}\label{F1}
\tilde R(X,~Y)Z=c(\tilde g(Y,~Z)X-\tilde g(X,~Z)Y),
\end{equation}
for any $X,~Y,~Z\in \Gamma (T\tilde M^m(c))$.

In this context, we shall define another important Riemannian intrinsic invariant called the {\it scalar curvature} of $\tilde M^m$, and denoted by $\tilde\tau (T_x\tilde M^m)$, which, at some $x$ in $\tilde M^m$, is given by 
\begin{equation}\label{10}
\tilde\tau (T_x\tilde M^m)=\sum_{1\leq i\textless j\leq m}\tilde K_{ij} ,
\end{equation}
where $\tilde K_{ij}=\tilde K(e_i\wedge e_j)$. It is clear that, the first equality in \eqref{10} is congruent to the following equation which will be frequently used in subsequent sections
\begin{equation}\label{}
2\tilde\tau (T_x\tilde M^m)=\sum_{1\leq i\neq j\leq m}\tilde K_{ij}.
\end{equation}

For the subsequent sections we introduce another two concepts of Riemannian invariants. We first take an integer $k$ such that, $2\le k\le m$, then the {\it Riemannian invariant} \cite{55kk99}, denoted by $\Theta$, on a Riemannian $m$-manifold $\tilde M^m$ is defined by 
\begin{equation}\label{J1}
\Theta_k(x)=\frac{1}{(k-1)} \inf_{\Pi_k, e_u} \tilde Ric_{\Pi_k}(e_u), ~~~~~~x\in \tilde M^m,
\end{equation}
where $\Pi_k$ runs over all $k$-planes in $T_x\tilde M^m$ and $e_u$ runs over all unit vectors in $\Pi_k$. The second invariant is called the {\it Chen first invariant}, which is defined as
\begin{equation}\label{K1}
\delta_{\tilde M^m}(x)=\tilde\tau (T_x\tilde M^m)-\inf \{\tilde K(\pi) : \pi \subset T_x\tilde M^m, x\in \tilde M^m, \dim \pi =2\}.
\end{equation}

Next, we recall two important differential operators of a differentiable function $\psi$ on $\tilde M^m$; namely the {\it gradient} $\tilde\nabla \psi$ and the {\it Laplacian} $\Delta \psi$ of $\psi$, which are defined, respectively, as follows
\begin{equation}\label{L1}
\tilde g(\tilde\nabla \psi, X) = X(\psi),\;\;\;\;\;\; \Delta \psi =\sum_{i=1}^{m}((\tilde\nabla_{e_i}e_i)\psi- e_ie_i \psi)
\end{equation}
for any vector field $X$ tangent to $\tilde M^m$, where $\tilde\nabla$ denotes the Levi-Civita connection on $\tilde M^m$. 

In an attempt to construct manifolds of negative curvatures, R.L. Bishop and O'Neill \cite{ddyy7} introduced the notion of {\it warped product manifolds} as follows: Let $N_1$ and $N_2$ be two Riemannian manifolds with Riemannian metrics $g_{N_1}$ and $g_{N_2}$, respectively, and $f>0$ a $C^\infty$ function on $N_1$. Consider the product manifold $N_1\times N_2$ with its projections $\pi_1:N_1\times N_2\mapsto N_1$ and $\pi_2:N_1\times N_2\mapsto N_2$. Then, the {\it warped product} $\tilde M^m= N_1\times _fN_2$ is the Riemannian manifold $N_1\times N_2=(N_1\times N_2, \tilde g)$ equipped with a Riemannian structure such that 
 $\tilde g=g_{N_1} + f^2 g_{N_2}$.

A warped product manifold $\tilde M^m=N_1\times _fN_2$ is said to be {\it trivial} if the warping function $f$ is constant. For a nontrivial warped product $N_1\times _fN_2$, we denote by $\mathfrak{D}_1$ and $\mathfrak{D}_2$ the distributions given by the vectors tangent to leaves and fibers, respectively. Thus, $\mathfrak{D}_1$ is obtained from tangent vectors of $N_1$ via the horizontal lift and $\mathfrak{D}_2$ is obtained by tangent vectors of $N_2$ via the vertical lift. 

Now, let $\{e_1,\cdots,e_{n_1}, e_{n_1+1}, \cdots, e_m\}$ be local fields of orthonormal frame of $\Gamma (T\tilde M^m)$ such that $n_1$, $n_2$ and $m$ are the dimensions of $N_1$, $N_2$ and $\tilde M^m$, respectively. Then, for any Riemannian warped product $ \tilde M^m=N_1\times _fN_2$, it is a well-known fact that the sectional curvature and the warping function are related by (see, for example \cite{2233ee}, \cite{55kk99}, \cite{ssee44})
\begin{equation}\label{24}
\sum_{a=1}^{n_1}\sum_{A=n_1+1}^{m} \tilde K(e_a\wedge e_A)=\frac{n_2\Delta f}{f}.
\end{equation}

Here, it is well-known that the {\it second fundamental form} $h$ and the {\it shape operator} $A_\xi$ of $M^n$ are related by  
\begin{equation}\label{5}
g(A_\xi (X,Y)=g(h(X,Y),\xi)
\end{equation}
for all $X,Y\in \Gamma(TM^n)$ and $\xi\in \Gamma(T^\perp M^n)$ (for instance, see \cite{pom}, \cite{iijj77}).

Likewise, we consider a local field of orthonormal frames \footnotemark[\value{footnote}] \footnotetext{Throughout this work, $M^n=N_1\times _fN_2$ denotes for the isometrically immersed warped product submanifold in $\tilde M^m$. The numbers $m,~n,~n_1,$ and $n_2$ are the dimensions of $\tilde M^m$, $M^n$, $N_1$ and $N_2$, respectively.} 
$\{e_1, \cdots , e_n, e_{n+1}, \cdots, e_m\}$ on $\tilde M^m$, such that, restricted to $M^n$, $\{e_1, \cdots , e_n\}$ are tangent to $M^n$ and $\{e_{n+1}, \cdots, e_m\}$ are normal to $M^n$. Then, the {\it mean curvature vector} $\vec H(x)$ is introduced as \cite{pom}, \cite{iijj77}
\begin{equation}\label{8}
\vec H(x)=\frac{1}{n} \sum_{i=1}^{n} h(e_i, e_i),
\end{equation}

On one hand, we say that $M^n$ is a {\it minimal submanifold} of $\tilde M^m$ if $\vec H=0$. On the other hand, one may deduce that $M^n$ is totally umbilical in $\tilde M^m$ if and only if $h(X,Y)=g(X,Y) \vec H$, for any $X,~Y\in \Gamma (TM^n)$ \cite{yyhh88}. It is remarkable to note that the scalar curvature $\tau (x)$ of $M^n$ at $x$ is identical with the scalar curvature of the tangent space $T_xM^n$ of $M^n$ at $x$; that is, $\tau (x)= \tau (T_xM^n)$ \cite{2233ee}.

In this series, the well-known {\it equation} of {\it Gauss} is given by 
\begin{equation}\label{12}
R(X,Y,Z,W)= \tilde R(X,Y,Z,W)+ g(h(X, W), h(Y, Z)) - g(h(X, Z), h(Y, W)),
\end{equation}
for any vectors $X, ~Y,~ Z,~ W\in \Gamma (TM^n)$, where $\tilde R$ and $R$ are the curvature tensors of $\tilde M^m$ and $M^n$, respectively.

From now on, we refer to the coefficients of the second fundamental form $h$ of $M^n$ with respect to the above local frame by the following notation 
\begin{equation}\label{13}
h_{ij}^r=g(h(e_i,e_j),e_r),
\end{equation}
 where $i, j \in \{1, . . . , n\}$, and $r \in \{ n+1, . . . , m\}$. First, by making use of \eqref{13}, \eqref{12} and \eqref{28}, we get the following
\begin{equation}\label{}
 K (e_i \wedge e_j )= \tilde K (e_i \wedge e_j) + \sum_{r=n+1}^{m} (g(h_{ii}^{r}~e_r, h_{jj}^{r} ~e_r)- g(h_{ij}^{r}~e_r, h_{ij}^{r}~e_r)).
\end{equation}

Equivalently,
\begin{equation}\label{14}
 K (e_i \wedge e_j )= \tilde K (e_i \wedge e_j) + \sum_{r=n+1}^{m} (h_{ii}^{r} h_{jj}^{r}- (h_{ij}^{r})^2),
\end{equation}
where $\tilde K (e_i \wedge e_j)$ denotes the sectional curvature of the $2$-plane spanned by $e_i$ and $e_j$ at $x$ in the ambient manifold $\tilde M^m$. Secondly, by taking the summation in the above equation over the orthonormal frame of the tangent space of $M^n$, and due to \eqref{10}, we immediately obtain
\begin{equation}\label{15}
2\tau (T_xM^n)= 2\tilde\tau (T_xM^n) +n^2 ||\vec H||^2-||h||^2,
\end{equation}
where
\begin{equation}\label{16}
 \tilde\tau (T_xM^n)=\sum_{1\le i< j\le n}\tilde K (e_i \wedge e_j)
\end{equation}
denotes the scalar curvature of the n-plane $T_xM^n$ in the ambient manifold $\tilde M^m$.

\section{Basic Lemmas}

This section is devoted to prove key lemmas, these lemmas are essential to perform the proof of the main theorem in the next section.

Now, let us recall the following significant key result for warped product submanifolds $ M^n=N_1\times _fN_2$. It is well-known that the sectional curvature and the warping function are related by 
\begin{equation}\label{24}
\sum_{a=1}^{n_1}\sum_{A=n_1+1}^{n} K(e_a\wedge e_A)=\frac{n_2\Delta f}{f},
\end{equation}
where $\{e_1,\cdots,e_{n_1}, e_{n_1+1}, \cdots, e_n\}$ are local fields of orthonormal frame of $\Gamma (TM^n)$ such that $n_1$, $n_2$ and $n$ are the dimensions of $N_1$, $N_2$ and $M^n$, respectively. It is clear that $e_a\in \{e_1,\cdots, e_{n_1}\}$, and $e_A\in \{e_{n_1+1},\cdots, e_{n}\}.$

\begin{lemma}\label{92}
Let $\varphi$ be an isometric $\mathfrak{D}$-minimal immersion, from a warped product $M^n=N_1\times _fN_2$ into  a Riemannian manifold $\tilde M^m$. Then
$$||h(\mathfrak{D}_1, \mathfrak{D}_2)||^2= \tilde \tau (T_xM^n)-\tilde \tau (T_xN_1)-\tilde \tau (T_xN_2)-\frac {n_2 ~\Delta (f)}{f},$$
where $\mathfrak{D}_1$ and $\mathfrak{D}_2$ are the distributions of the first and the second factors of $N_1\times _fN_2$, respectively.
\end{lemma}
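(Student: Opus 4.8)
The plan is to apply the Gauss equation to the \emph{mixed} $2$-planes, namely those spanned by one vector from $\mathfrak{D}_1$ and one from $\mathfrak{D}_2$, and then to sum over all of them. First I would adapt the orthonormal frame $\{e_1,\dots,e_n\}$ so that $\{e_1,\dots,e_{n_1}\}$ spans $\mathfrak{D}_1$ and $\{e_{n_1+1},\dots,e_n\}$ spans $\mathfrak{D}_2$. Writing \eqref{14} for a mixed pair $e_a$, $e_A$ with $1\le a\le n_1$ and $n_1+1\le A\le n$, and summing over all such pairs, I would regroup the quadratic terms so that $\sum_{r,a,A}h_{aa}^rh_{AA}^r$ factors as a product of the two partial traces while $\sum_{r,a,A}(h_{aA}^r)^2$ becomes $||h(\mathfrak{D}_1,\mathfrak{D}_2)||^2$. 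This produces the identity
\[
\sum_{a=1}^{n_1}\sum_{A=n_1+1}^{n} K(e_a\wedge e_A)=\sum_{a=1}^{n_1}\sum_{A=n_1+1}^{n}\tilde K(e_a\wedge e_A)+\sum_{r=n+1}^{m}\Big(\sum_{a=1}^{n_1}h_{aa}^r\Big)\Big(\sum_{A=n_1+1}^{n}h_{AA}^r\Big)-||h(\mathfrak{D}_1,\mathfrak{D}_2)||^2 .
\]

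The decisive step is the $\mathfrak{D}$-minimality hypothesis. Since it forces the partial traces $\sum_{a=1}^{n_1}h_{aa}^r$ and $\sum_{A=n_1+1}^{n}h_{AA}^r$ to vanish for every normal direction $e_r$, the entire cross term drops out, leaving only the two curvature sums and $||h(\mathfrak{D}_1,\mathfrak{D}_2)||^2$. It is worth stressing that this is strictly stronger than ordinary minimality $\vec H=0$: the latter only asserts that the two partial traces are opposite, which would not annihilate their product.

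It then remains to identify the two curvature sums. By the warped product identity \eqref{24} the left-hand side equals $n_2\,\Delta f/f$. For the ambient term, I would split the defining sum \eqref{10} of $\tilde\tau(T_xM^n)$ according to whether a pair has both indices in $\mathfrak{D}_1$, both in $\mathfrak{D}_2$, or is mixed; this exhaustive and non-overlapping partition gives
\[
\sum_{a=1}^{n_1}\sum_{A=n_1+1}^{n}\tilde K(e_a\wedge e_A)=\tilde\tau(T_xM^n)-\tilde\tau(T_xN_1)-\tilde\tau(T_xN_2).
\]
Substituting both identities into the displayed equation above and solving for $||h(\mathfrak{D}_1,\mathfrak{D}_2)||^2$ yields the assertion.

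I expect no analytic difficulty; the argument is purely the Gauss equation \eqref{14} together with two summation identities. The main points requiring care are bookkeeping: making sure the $\mathfrak{D}$-minimality is invoked precisely to kill the mixed-trace cross term, and checking that the index splitting of the scalar-curvature sum is complete and counts each $2$-plane exactly once, so that $\tilde\tau(T_xN_1)$ and $\tilde\tau(T_xN_2)$ enter with the correct coefficient.
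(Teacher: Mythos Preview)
Your proof is correct and in fact more direct than the paper's. Both arguments rest on the same three ingredients --- the Gauss equation, the warped-product identity \eqref{24}, and the $\mathfrak{D}$-minimality hypothesis --- but they are organized differently. The paper begins from the \emph{global} Gauss identity $n^2||\vec H||^2=||h||^2+2\tau(T_xM^n)-2\tilde\tau(T_xM^n)$, then expands $||h||^2$ completely, substitutes separate Gauss identities for $\tau(T_xN_1)$ and $\tau(T_xN_2)$, and finally invokes $\mathfrak{D}$-minimality to collapse three separate block sums; only after considerable bookkeeping does the mixed term $||h(\mathfrak{D}_1,\mathfrak{D}_2)||^2$ emerge. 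You instead apply \eqref{14} \emph{only} to the mixed $2$-planes and sum, so that $||h(\mathfrak{D}_1,\mathfrak{D}_2)||^2$ appears immediately and the sole role of $\mathfrak{D}$-minimality is to annihilate the single cross term $\sum_r(\sum_a h_{aa}^r)(\sum_A h_{AA}^r)$. This buys a much shorter argument with less index manipulation; the paper's route, by contrast, makes the connection with the full scalar curvature of $M^n$ more visible, which is perhaps why the authors chose it given the later use of $\tau(T_xM^n)$ in Theorem~\ref{315}. One small remark: your proof actually needs only \emph{one} of the two partial traces to vanish in order to kill the product, so the hypothesis could be weakened to $\mathfrak{D}_1$-minimality (or $\mathfrak{D}_2$-minimality) alone; the paper's proof, read carefully, likewise only uses $\mathfrak{D}_1$-minimality together with the choice of $e_{n+1}$ parallel to $\vec H$.
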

\begin{proof}
In virtue of the Gauss equation, we have
\begin{equation}\label{djnrg4654ygvf}
n^2||\vec H||^2=||h||^2+2\tau(T_xM^n) -2\tilde \tau(T_xM^n).
\end{equation}

Now, let $\{e_1, \cdots, e_{n_1}, e_{n_1+1}, \cdots, e_n=e_{n_1+n_2}\}$ and $\{e_{n+1}, \cdots, e_m\}$ be the local fields of orthonormal frames of $\Gamma(TM^n)$ and $\Gamma(T^\perp M^n)$, respectively, where $\{e_1, \cdots, e_{n_1}\}$ and $\{e_{n_1+1}, \cdots, e_n=e_{n_1+n_2}\}$ are the frames of $\Gamma(TN_1)$ and $\Gamma(TN_2)$, respectively.~Then, and without loss of generality, choose $e_{n+1}$ to be in the direction of the mean curvature vector $\vec H$.

Now, from $(\ref{16})$, we have 
\begin{equation}\label{fghjklwqer}
\tau \bigl(T_xM^n\bigr)=\sum_{1\le i<j\le n} K_{ij}=\sum_{a=1}^{n_1} \sum_{A=n_1+1}^{n}K_{aA}+\sum_{1\le a<b\le n_1} K_{ab}+\sum_{n_1+1\le A<B\le n} K_{AB}.
\end{equation}

Via (\ref{24}) and (\ref{16}), the above equation is congruent to  
\begin{equation}\label{frtrtyyywyywryewrw}
\tau \bigl(T_xM^n\bigr)=\frac{n_2 \Delta f}{f}
+\tau \bigl(T_xN_1\bigr)+ \tau \bigl(T_xN_2\bigr).
\end{equation}

In view of  (\ref{14}), it is common to have
\begin{equation}\label{vcvcnssnv577}
\tau \bigl(T_xN_1\bigr)=\sum_{r=n+1}^{m} \sum_{1\le a <b \le n_1} \biggl(h_{aa}^r h_{bb }^r-\bigl(h_{a b}^r\bigr)^2\biggr)+\tilde\tau \bigl(T_xN_2\bigr),
\end{equation}
and
\begin{equation}\label{3255555gsdfjhsdfjng}
\tau \bigl(T_xN_2\bigr)=\sum_{r=n+1}^{m} \sum_{n_1+1\le A <B\le n}\biggl(h_{AA}^rh_{BB}^r-\bigl(h_{AB}^r\bigr)^2\biggr)+\tilde \tau \bigl(T_xN_2\bigr).
\end{equation}

By $(\ref{djnrg4654ygvf})$-$(\ref{3255555gsdfjhsdfjng})$, one directly obtains

\begin{equation}
\biggl(\sum_{i=1}^{n}h_{ii}^{n+1}\biggr)^2=\sum_{r=n+1}^{m}\sum_{i=1}^{n}(h_{ii}^r)^2\newline +\sum_{r=n+1}^{m}\sum_{\substack {i,j=1 \\ i\neq j}}^{n}(h_{ij}^r)^2+\frac{2n_2\Delta f}{f}\notag $$$$+2\sum_{r=n+1}^{m}\sum_{1\le a<b\le n_1} \biggl(h_{aa}^r h_{bb}^r- (h_{ab}^r)^2\biggr) +2\sum_{r=n+1}^{m}\sum_{n_1+1\le A<B\le n} \biggl(h_{AA}^r h_{BB}^r- (h_{AB}^r)^2\biggr)\notag \\$$$$+2\biggl(\tilde\tau(T_xN_1)+\tilde\tau(T_xN_2)-\tilde\tau(T_xM^n)\biggr).~~~~~~~~~~~~~~~~~~~~~~~~~~~~~~~~~
\end{equation}

By rearranging the right hand side terms in an appropriate manner, we can obtain
\begin{equation}\label{1a}
\biggl(\sum_{i=1}^{n}h_{ii}^{n+1}\biggr)^2=\sum_{r=n+1}^{m}\sum_{a=1}^{n_1}(h_{aa}^r)^2+2\sum_{r=n+1}^{m}\sum_{1\le a<b\le n_1} h_{aa}^r h_{bb}^r$$$$+\sum_{r=n+1}^{m}\sum_{A=n_1+1}^{n}(h_{AA}^r)^2+2\sum_{r=n+1}^{m}\sum_{n_1+1\le A<B\le n} h_{AA}^r h_{BB}^r$$$$+\sum_{r=n+1}^{m}\sum_{\substack {i,j=1 \\ i\neq j}}^{n}(h_{ij}^r)^2-2\sum_{r=n+1}^{m}\sum_{1\le a<b\le n_1}  (h_{ab}^r)^2 -2\sum_{r=n+1}^{m}\sum_{n_1+1\le A<B\le n}  (h_{AB}^r)^2 \\$$$$+\frac{2n_2\Delta f}{f} \\+2\biggl(\tilde\tau(T_xN_1)+\tilde\tau(T_xN_2)-\tilde\tau(T_xM^n)\biggr).~~~~~~~~~~~~~~~~~~~~~~~~~~~~~~~~~
\end{equation}

Since the the immersion is a $\mathfrak{D}$-minimal warped product Riemannian manifold, we can write
$$\sum_{r=n+1}^{m}\sum_{a=1}^{n_1}(h_{aa}^r)^2+2\sum_{r=n+1}^{m}\sum_{1\le a<b\le n_1} h_{aa}^r h_{bb}^r=0,$$
$$\sum_{r=n+1}^{m}\sum_{A=n_1+1}^{n}(h_{AA}^r)^2+2\sum_{r=n+1}^{m}\sum_{n_1+1\le A<B\le n} h_{AA}^r h_{BB}^r=\biggl(\displaystyle\sum_{i=1}^{n}h_{ii}^{n+1}\biggr)^2$$
and
$$\sum_{r=n+1}^{m}\sum_{\substack {i,j=1 \\ i\neq j}}^{n}(h_{ij}^r)^2-2\sum_{r=n+1}^{m}\sum_{1\le a<b\le n_1}  (h_{ab}^r)^2 -2\sum_{r=n+1}^{m}\sum_{n_1+1\le A<B\le n}  (h_{AB}^r)^2$$$$=2\displaystyle\sum_{r=n+1}^{m}\sum_{a=1}^{n_1}\sum_{A=n_1+1}^{n}(h_{aA}^r)^2.$$

By substituting the above three equations in (\ref{1a}), we immediately reach
$$2\sum_{r=n+1}^{m}\sum_{a=1}^{n_1}\sum_{A=n_1+1}^{n}(h_{aA}^r)^2=-\frac{2n_2\Delta f}{f}-2\biggl(\tilde\tau(T_xN_1)+\tilde\tau(T_xN_2)-\tilde\tau(T_xM^n)\biggr).$$

 This gives the assertion.
\end{proof}

For simplicity's sake, we present three computational lemmas in this section. Once they are verified, the proofs of inequalities in the next section become straightforward.

The following result is useful in proofs of the next inequality.
\begin{lemma}\label{djei93}
Let $\varphi$ be an isometric $\mathfrak{D}$-minimal immersion, from a warped product $M^n=N_1\times _fN_2$ into  a Riemannian manifold $\tilde M^m$. Then, we have
\begin{equation}\label{93928vkg0}
\frac{1}{2}\sum_{\substack{i,j=1\\i\neq j}}^n(h_{ij}^{n+1})^2+\frac{1}{2}\sum_{r=n+2}^{m}~\sum_{i,j=1}^n(h_{ij}^r)^2+\sum_{r=n+2}^m h_{11}^rh_{22}^r-\sum_{r=n+1}^m(h_{12}^r)^2=$$$$
\frac{1}{2}\sum_{\substack {i,j=3\\i\neq j}}^n(h_{ij}^{n+1})^2+\frac{1}{2}\sum_{r=n+2}^{m}~\sum_{i,j=3}^n(h_{ij}^r)^2+
\frac{1}{2}\sum_{r=n+2}^m(h_{11}^r+h_{22}^r)^2+\sum_{r=n+1}^m
~\sum_{j=3}^n\biggl((h_{1j}^r)^2+ (h_{2j}^r)^2\biggr),
\end{equation}
where $n$ and $m$ are the dimensions of $M^n$ and $\tilde M^m$, respectively.
\end{lemma}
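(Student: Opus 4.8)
The plan is to treat \eqref{93928vkg0} as a purely algebraic identity in the real numbers $h_{ij}^r$ subject only to the symmetry $h_{ij}^r=h_{ji}^r$ of the second fundamental form, and to verify it by a term-by-term comparison of the two sides; in fact the $\mathfrak{D}$-minimality hypothesis appears not to be needed for the identity itself, so no geometric input beyond the symmetry of $h$ is required. The governing idea is to split the index range $\{1,\dots,n\}$ into the distinguished pair $\{1,2\}$ and the complementary block $\{3,\dots,n\}$, and to keep the normal index $r=n+1$ separate from the range $r\geq n+2$, since the four summands on the left carry different $r$-ranges.

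First I would expand the left-hand side block by block. In the first summand $\tfrac12\sum_{i\neq j}(h_{ij}^{n+1})^2$ the ordered off-diagonal pairs split into the pair $(1,2),(2,1)$, the mixed pairs with one index in $\{1,2\}$ and one in $\{3,\dots,n\}$, and the pairs lying entirely in $\{3,\dots,n\}$; using $h_{ij}^{n+1}=h_{ji}^{n+1}$ the factor $\tfrac12$ collapses the doubled counts, producing $(h_{12}^{n+1})^2$, the mixed term $\sum_{j=3}^{n}\big((h_{1j}^{n+1})^2+(h_{2j}^{n+1})^2\big)$, and $\tfrac12\sum_{i\neq j,\,i,j=3}^{n}(h_{ij}^{n+1})^2$. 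The same split applied to $\tfrac12\sum_{r\geq n+2}\sum_{i,j=1}^{n}(h_{ij}^r)^2$, now including the diagonal entries, yields $\tfrac12\big((h_{11}^r)^2+(h_{22}^r)^2\big)+(h_{12}^r)^2$ from the $\{1,2\}$ block, the mixed piece $\sum_{j=3}^{n}\big((h_{1j}^r)^2+(h_{2j}^r)^2\big)$, and the block piece $\tfrac12\sum_{i,j=3}^{n}(h_{ij}^r)^2$.

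The decisive recombinations are then three. Adding the $\{1,2\}$-diagonal contribution $\tfrac12\big((h_{11}^r)^2+(h_{22}^r)^2\big)$ to the cross term $\sum_{r\geq n+2}h_{11}^rh_{22}^r$ completes the square to $\tfrac12\sum_{r\geq n+2}(h_{11}^r+h_{22}^r)^2$, exactly the third summand on the right. The $(h_{12}^r)^2$ produced for $r\geq n+2$ cancels against the corresponding part of $-\sum_{r=n+1}^{m}(h_{12}^r)^2$, while the remaining $r=n+1$ part of that deleted term cancels the $(h_{12}^{n+1})^2$ generated above. Finally, the two mixed pieces from $r=n+1$ and from $r\geq n+2$ merge into $\sum_{r=n+1}^{m}\sum_{j=3}^{n}\big((h_{1j}^r)^2+(h_{2j}^r)^2\big)$, and the two block pieces become $\tfrac12\sum_{i\neq j,\,i,j=3}^{n}(h_{ij}^{n+1})^2+\tfrac12\sum_{r\geq n+2}\sum_{i,j=3}^{n}(h_{ij}^r)^2$, reproducing the remaining terms on the right.

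I expect no conceptual obstacle; the only genuine care lies in the bookkeeping. One must respect that the first summand ranges over $r=n+1$ only whereas the second and third range over $r\geq n+2$, that the deleted term $-\sum_{r=n+1}^m(h_{12}^r)^2$ straddles both ranges, and that each use of $h_{ij}^r=h_{ji}^r$ is matched with the compensating factor $\tfrac12$ so that no mixed or off-diagonal term is counted with the wrong multiplicity. Tracking these factors of two and the split $r$-ranges consistently is the entire content of the proof.
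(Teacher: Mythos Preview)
Your proposal is correct and follows essentially the same approach as the paper: expand the first two summands on the left by splitting the tangent indices into $\{1,2\}$ and $\{3,\dots,n\}$, then recombine via the same three observations (completing the square for $h_{11}^r+h_{22}^r$ with $r\ge n+2$, cancelling all $(h_{12}^r)^2$ contributions, and merging the mixed pieces over the full normal range). Your additional remark that the $\mathfrak{D}$-minimality hypothesis plays no role in the identity is correct---the paper states the lemma under this hypothesis but its proof, like yours, uses only the symmetry $h_{ij}^r=h_{ji}^r$.
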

\begin{proof}
If we start from the left hand side of $(\ref{djei93})$, then the first two terms can be respectively expanded in this way
\begin{equation}\label{sivng}
\frac{1}{2}\sum_{\substack{i,j=1\\i\neq j}}^n(h_{ij}^{n+1})^2=\frac{1}{2}\sum_{\substack{i,j=3\\i\neq j}}^n(h_{ij}^{n+1})^2+\sum_{j=3}^n(h_{1j}^{n+1})^2+(h_{12}^{n+1})^2+\sum_{j=3}^n(h_{2j}^{n+1})^2,
\end{equation}
and
\begin{equation}\label{acbuer}
\frac{1}{2}\sum_{r=n+2}^{m}~\sum_{i,j=1}^n(h_{ij}^r)^2=\frac{1}{2}\sum_{r=n+2}^{m}~\sum_{i,j=3}^n(h_{ij}^r)^2+\sum_{r=n+2}^{m}
~\sum_{j=3}^n(h_{1j}^r)^2+\sum_{r=n+2}^{m}~\sum_{j=3}^n(h_{2j}^r)^2
$$$$+\sum_{r=n+2}^m(h_{12}^r)^2+\frac{1}{2}\sum_{r=n+2}^{m}\bigg((h_{11}^r)^2+(h_{22}^r)^2\bigg).
\end{equation}

Using equations $(\ref{sivng})$ and $(\ref{acbuer})$ to substitute the first two terms on the left hand side of $(\ref{djei93})$, taking into consideration the following relations
$$\sum_{r=n+2}^m h_{11}^rh_{22}^r+\frac{1}{2}\sum_{r=n+2}^{m}\bigg((h_{11}^r)^2+(h_{22}^r)^2\bigg)=\frac{1}{2}\sum_{r=n+2}^m(h_{11}^r+h_{22}^r)^2,$$

$$\sum_{j=3}^n(h_{1j}^{n+1})^2+\sum_{r=n+2}^{m}~\sum_{j=3}^n(h_{1j}^r)^2
+\sum_{j=3}^n(h_{2j}^{n+1})^2+\sum_{r=n+2}^{m}~\sum_{j=3}^n(h_{2j}^r)^2
$$$$=\sum_{r=n+1}^m~\sum_{j=3}^n\biggl((h_{1j}^r)^2+ (h_{2j}^r)^2\biggr)$$
and
$$(h_{12}^{n+1})^2+\sum_{r=n+2}^m(h_{12}^r)^2=\sum_{r=n+1}^m(h_{12}^r)^2,$$
the right hand side of $(\ref{djei93})$ follows immediately, and completes the proof.
\end{proof}

Finally, for the  inequality in the next section, we prove the following computational lemma.
\begin{lemma}\label{kdiwi84cb}
Let $\varphi$ be an isometric $\mathfrak{D}$-minimal immersion, from a warped product $M^n=N_1\times _fN_2$ into  a Riemannian manifold $\tilde M^m$. Then, the following holds
\begin{equation}\label{fwgb23463}
\frac{1}{2}\sum_{\substack {i,j=3\\i\neq j}}^n(h_{ij}^{n+1})^2+\frac{1}{2}\sum_{r=n+2}^{m}~\sum_{i,j=3}^n(h_{ij}^r)^2+\sum_{r=n+1}^m
~\sum_{j=3}^n\biggl((h_{1j}^r)^2+ (h_{2j}^r)^2\biggr)=$$$$
\frac{1}{2}\sum_{\substack{a,b=3\\a\neq b}}^{n_1}(h_{ab}^{n+1})^2+\frac{1}{2}\sum_{\substack{A,B=n_1+1\\A\neq B}}^{n}(h_{AB}^{n+1})^2+\frac{1}{2}\sum_{r=n+2}^{m}~\sum_{a,b=3}^{n_1}(h_{ab}^r)^2+\frac{1}{2}\sum_{r=n+2}^{m}~\sum_{A,B=n_1+1}^{n}(h_{AB}^r)^2$$$$+\sum_{r=n+1}^m~\sum_{a=3}^{n_1}\biggl((h_{1a}^r)^2+ (h_{2a}^r)^2\biggr)+\sum_{r=n+1}^{m}~\sum_{a=1}^{n_1}
~\sum_{A=n_1+1}^n(h_{aA}^r)^2,
\end{equation}
where $n_1,~n$ and $m$ are the dimensions of $N_1,~M^n$ and $\tilde M^m$, respectively.
\end{lemma}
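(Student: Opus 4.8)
The plan is to recognize \eqref{fwgb23463} as a purely combinatorial bookkeeping identity for the components $h_{ij}^r$ of the second fundamental form: both sides are sums of squares of the \emph{same} quantities, and the only content is that the left-hand index range $\{3,\dots,n\}$ is reorganized according to the warped-product splitting. Recall that in the chosen frame the directions $e_1,\dots,e_{n_1}$ span $\mathfrak{D}_1$ and $e_{n_1+1},\dots,e_n$ span $\mathfrak{D}_2$, and that the distinguished indices $1$ and $2$ both lie in $\mathfrak{D}_1$ (so that $n_1\ge 2$). Thus every index $j\in\{3,\dots,n\}$ is either an $N_1$-direction $a\in\{3,\dots,n_1\}$ or an $N_2$-direction $A\in\{n_1+1,\dots,n\}$, and I will use the symmetry $h_{aA}^r=h_{Aa}^r$ throughout.

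First I would expand the first term $\tfrac12\sum_{\substack{i,j=3\\i\neq j}}^n (h_{ij}^{n+1})^2$ into three pieces: the pure $N_1$ pairs $\tfrac12\sum_{\substack{a,b=3\\a\neq b}}^{n_1}(h_{ab}^{n+1})^2$, the pure $N_2$ pairs $\tfrac12\sum_{\substack{A,B=n_1+1\\A\neq B}}^{n}(h_{AB}^{n+1})^2$, and the mixed pairs, where the two opposite orderings $(a,A)$ and $(A,a)$ combine via symmetry into $\sum_{a=3}^{n_1}\sum_{A=n_1+1}^n (h_{aA}^{n+1})^2$. The second term $\tfrac12\sum_{r=n+2}^m\sum_{i,j=3}^n (h_{ij}^r)^2$ (now including $i=j$) splits in exactly the same way, yielding the pure $N_1$ and pure $N_2$ squares together with a mixed contribution $\sum_{r=n+2}^m\sum_{a=3}^{n_1}\sum_{A=n_1+1}^n (h_{aA}^r)^2$. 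For the third term I would split the summation index $j$: the part with $j\in\{3,\dots,n_1\}$ reproduces verbatim the right-hand term $\sum_{r=n+1}^m\sum_{a=3}^{n_1}\bigl((h_{1a}^r)^2+(h_{2a}^r)^2\bigr)$, while the part with $j\in\{n_1+1,\dots,n\}$ leaves over $\sum_{r=n+1}^m\sum_{A=n_1+1}^n\bigl((h_{1A}^r)^2+(h_{2A}^r)^2\bigr)$.

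It then remains only to assemble the mixed $\mathfrak{D}_1$--$\mathfrak{D}_2$ contributions. The mixed pieces from the first two terms combine across the value $r=n+1$ (from term one) and $r\ge n+2$ (from term two) into $\sum_{r=n+1}^m\sum_{a=3}^{n_1}\sum_{A=n_1+1}^n (h_{aA}^r)^2$. Adding the leftover piece of the third term, and using that $1,2$ are precisely the $N_1$-directions excluded from $\{3,\dots,n_1\}$, I obtain
$$\sum_{r=n+1}^m\sum_{a=3}^{n_1}\sum_{A=n_1+1}^n (h_{aA}^r)^2+\sum_{r=n+1}^m\sum_{A=n_1+1}^n\bigl((h_{1A}^r)^2+(h_{2A}^r)^2\bigr)=\sum_{r=n+1}^m\sum_{a=1}^{n_1}\sum_{A=n_1+1}^n (h_{aA}^r)^2,$$
which is exactly the final right-hand term. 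Matching the remaining pure-$N_1$, pure-$N_2$, and $(1,2)$-with-$N_1$ pieces term by term completes the identity. The only genuine obstacle is bookkeeping discipline: one must track the distinguished indices $1,2$ carefully (they feed the full cross sum $\sum_{a=1}^{n_1}$ rather than $\sum_{a=3}^{n_1}$) and correctly reconcile the factors of $\tfrac12$ that arise when symmetry merges the two orderings of each mixed pair. No geometric input beyond the symmetry of $h$ is needed.
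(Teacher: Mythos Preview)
Your proof is correct and follows essentially the same route as the paper: both arguments expand the first two left-hand terms according to the splitting $\{3,\dots,n\}=\{3,\dots,n_1\}\cup\{n_1+1,\dots,n\}$ (the paper's equations for $\tfrac12\sum_{i\neq j}(h_{ij}^{n+1})^2$ and $\tfrac12\sum_{r\ge n+2}\sum_{i,j}(h_{ij}^r)^2$ are exactly your decompositions), and then absorb the leftover mixed pieces together with the $j$-split of the third term into the full cross sum $\sum_{r}\sum_{a=1}^{n_1}\sum_{A}(h_{aA}^r)^2$. Your explicit remark that the $\mathfrak{D}$-minimality hypothesis plays no role and that only the symmetry $h_{aA}^r=h_{Aa}^r$ is used is a welcome clarification not stated in the paper.
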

\begin{proof}
It is not difficult to expand the following expressions as
\begin{equation}\label{5ujcm}
\frac{1}{2}\sum_{\substack{i,j=3\\i\neq j}}^n(h_{ij}^{n+1})^2=\frac{1}{2}\sum_{\substack{a,b=3\\a\neq b}}^{n_1}(h_{ab}^{n+1})^2+\frac{1}{2}\sum_{\substack{A,B=n_1+1\\A\neq B}}^n(h_{AB}^{n+1})^2+\sum_{a=3}^{n_1}~\sum_{A=n_1+1}^n(h_{aA}^{n+1})^2,
\end{equation}
and 
\begin{equation}\label{xogis}
\frac{1}{2}\sum_{r=n+2}^m~\sum_{i,j=3}^n(h_{ij}^{r})^2=\frac{1}{2}\sum_{r=n+2}^m~\sum_{a,b=3}^{n_1}(h_{ab}^{r})^2+\frac{1}{2}\sum_{r=n+2}^m~\sum_{A,B=n_1+1}^n(h_{AB}^{r})^2
$$$$+\sum_{r=n+2}^m~\sum_{a=3}^{n_1}~\sum_{A=n_1+1}^n(h_{aA}^{r})^2.
\end{equation}

If we use the above two equations to substitute the first two terms in the left hand side of $(\ref{fwgb23463})$, taking into account the following equation
$$\sum_{r=n+1}^m
~\sum_{j=3}^n\biggl((h_{1j}^r)^2+ (h_{2j}^r)^2\biggr)+\sum_{a=3}^{n_1}~\sum_{A=n_1+1}^n(h_{aA}^{n+1})^2
+\sum_{r=n+2}^m~\sum_{a=3}^{n_1}~\sum_{A=n_1+1}^n(h_{aA}^{r})^2
=$$$$\sum_{r=n+1}^m~\sum_{a=3}^{n_1}\biggl((h_{1a}^r)^2+ (h_{2a}^r)^2\biggr)+\sum_{r=n+1}^{m}~\sum_{a=1}^{n_1}
~\sum_{A=n_1+1}^n(h_{aA}^r)^2,$$
then the right hand side of $(\ref{fwgb23463})$ automatically follows. 
\end{proof}

\section{ First Chen  Inequality of General Warped  Product Submanifolds of a Riemannian Space Form}

\begin{theorem}\label{315}
Let $\varphi :M^n=N_1\times _fN_2 \longrightarrow \tilde M^m(c)$ be an isometric immersion of a warped product submanifold $M^n$ into a Riemannian space form $\tilde M^m(c)$. Then, for each point $x\in M^n$ and each plane section $\pi_i \subset T_xN_i^{n_i}$, $n_i\ge 2$, for $i=1,~2$, we have: 
 \begin{itemize}
\item[(i)] if $\pi_1 \subset T_xN_1$, then 
\begin{equation}\label{10073}
\delta_{N_1^{n_1}}(x)\le \frac{n^2}{2} ||\vec H||^2-\frac{n_2 \Delta f}{f}+\frac{1}{2}n_1(n_1+2n_2-1)c-c;
\end{equation}
\item[(ii)] if $\pi_2 \subset T_xN_2$, then 
\begin{equation}\label{}
\delta_{N_2^{n_2}}(x)\le \frac{n^2}{2} ||\vec H||^2-\frac{n_2 \Delta f}{f}+\frac{1}{2}n_2(n_2+2n_1-1)c-c.
\end{equation}
\end{itemize}

Equalities of the above two inequalities hold at $x\in M^n$ if and only if there exists an orthonormal basis $\{e_1, \cdots, e_n\}$ of $T_xM^n$ and an orthonormal basis $\{e_{n+1}, \cdots, e_m\}$ of $T_x^\perp M^n$ such that (a) $\pi = Span \{e_1, e_2\}$ and (b) the shape operators take the following forms:
\begin{itemize}
\item[(~$\grave{i}$)] If $\pi_1 \subset T_xN_1$, then for $r=n+1$, we have
$$A_{e_{n+1}} =
 \begin{pmatrix}
  \mu_1 & h_{12}^{n+1} & 0 & \cdots & 0_{1n_1} & \vline & 0_{1n_1+1} & \cdots & 0_{1n} \\
  h_{21}^{n+1} & \mu_2 & 0 & \cdots & \vdots & \vline & \vdots & \cdots & \vdots  \\
  0 & 0 & \mu & \cdots & \vdots & \vline& \vdots &  \cdots& \vdots \\
  \vdots & \vdots & \vdots & \ddots & \vdots & \vline & \vdots & \cdots & \vdots \\
  0_{n_11} & 0 & 0 & \cdots & \mu & \vline & 0_{n_1n_1+1} & \cdots & 0_{n_1n} \\
\hline
  0_{n_1+11} & \cdots & \cdots & \cdots & 0_{n_1+1n_1} & \vline & h^{n+1}_{n_1+1n_1+1} & \cdots & h^{n+1}_{n_1+1n} \\
  \vdots & \ddots & \ddots & \ddots & \vdots & \vline & \vdots & \ddots & \vdots \\
  0_{n1} & \cdots & \cdots & \cdots & 0_{nn_1} & \vline & h^{n+1}_{nn_1+1} & \cdots & h^{n+1}_{nn} 
 \end{pmatrix}
,$$
$\mu=\mu_1+\mu_2.$

 If $r\in \{n+2, \cdots, m\}$, then we have 
$$A_{e_r} = 
\begin{pmatrix}
  h_{11}^{r} & h_{12}^{r} & 0 & \cdots & 0_{1n_1} & \vline & 0_{1n_1+1} & \cdots & 0_{1n} \\
  h_{21}^{r} & - h_{11}^{r} & 0 & \cdots & \vdots & \vline & \vdots & \cdots & \vdots  \\
  0 & 0 & 0_{33} & \cdots & \vdots & \vline& \vdots & \cdots & \vdots \\
  \vdots & \vdots & \vdots & \cdots & \vdots & \vline & \vdots &\cdots  & \vdots \\
  0_{n_11} & 0 & 0 & \cdots & 0_{n_1n_1} & \vline & 0_{n_1n_1+1} & \cdots & 0_{n_1n} \\
\hline
  0_{n_1+11} & \cdots & \cdots & \cdots & 0_{n_1+1n_1} & \vline & h^r_{n_1+1n_1+1} & \cdots & h^r_{n_1+1n} \\
  \vdots & \ddots & \ddots &\ddots  & \vdots & \vline & \vdots & \ddots & \vdots \\
  0_{n1} & \cdots & \cdots & \cdots &0_{nn_1} & \vline & h^r_{nn_1+1} & \cdots & h^r_{nn} 
 \end{pmatrix}.
$$
\item[$(\grave{ii})$] If $\pi_2 \subset T_xN_2$, then for $r=n+1$, we have
$$A_{e_{n+1}} =~~~~~~~~~~~~~~~~~~~~~
~~~~~~~~~~~~~~~~~~~~~~
~~~~~~~~~~~~~~~~~~~~~~~
~~~~~~~~~~~~~~~~~~~~~~~
~~~~~~~~~~~~~~~~~~~~~~~~
~~~~~~~~~~~~~~~~~~~~~~~~~
~~~~~~~~~~~~~~~~~~~~~~~~~~
~~~~~~~~~~~~~~~~~~~~~~~~~~~
~~~~~~~~~~~~~~~~~~~~~~~~~~~~~~~~
~~~~~~~~~~~~~~~~~~~~~~~~~~~~~~~~~~$$$$ \begin{pmatrix}
 h^{n+1}_{11} & \cdots& \cdots & h^{n+1}_{1n_1} &\vline& 0_{1n_1+1} & \cdots & \cdots & \cdots &0_{1n}\\
\vdots&\ddots&&\vdots &\vline&\vdots&\ddots&\ddots&\ddots&\vdots\\
   \vdots & &\ddots & \vdots &\vline &\vdots & \ddots &\ddots  &\ddots  &\vdots\\
   h^{n+1}_{n_11} & \cdots&\cdots & h^{n+1}_{n_1n_1} &\vline& 0_{n_1n_1+1} & \cdots & \cdots & \cdots & 0_{n_1n}\\
\hline
   0_{n_1+11} & \cdots& \cdots & 0_{n_1+1n_1} &\vline &\mu_1 & h^{n+1}_{n_1+1n_1+2} & 0 & \cdots & 0_{n_1+1n}\\
    \vdots &\ddots & \ddots & \vdots  &\vline &h^{n+1}_{n_1+2n_1+1} & \mu_2 & 0 & \cdots & \vdots \\
  \vdots &\ddots  &\ddots & \vdots &\vline& 0 & 0 & \mu & \cdots & \vdots \\
  \vdots &\ddots  &\ddots & \vdots & \vline & \vdots & \vdots & 0 &\ddots &0\\
    0_{n1} &\cdots &\cdots & 0_{nn_1} &\vline &0_{nn_1+1} & 0 & \cdots&0 & \mu 
 \end{pmatrix}
,$$
where $\mu=\mu_1+\mu_2.$ 

If $r\in \{n+2, \cdots, m\}$, then we have 
$$A_{e_{r}}=~~~~~~~~~~~~~~~~~~~~~~~~~~~~~~~~~~~~~~
~~~~~~~~~~~~~~~~~~~~~~~~~~~~
~~~~~~~~~~~~~~~~~~~~~~~~~~~~~~~~~~~~~~~~~~~~~~~
~~~~~~~~~~~~~~~~~~~~~~~~~~~~
~~~~~~~~~~~~~~~~~~~~~~~~~$$$$
 \begin{pmatrix}
 h^r_{11} & \cdots& \cdots & h^r_{1n_1} &\vline& 0_{1n_1+1} & \cdots & \cdots & \cdots &0_{1n}\\
\vdots&\ddots&&\vdots &\vline&\vdots&\ddots&\ddots&\ddots&\vdots\\
   \vdots & &\ddots & \vdots &\vline &\vdots & \ddots &\ddots  &\ddots  &\vdots\\
   h^r_{n_11} & \cdots&\cdots & h^r_{n_1n_1} &\vline& 0_{n_1n_1+1} & \cdots & \cdots & \cdots & 0_{n_1n}\\
\hline
   0_{n_1+11} & \cdots& \cdots & 0_{n_1+1n_1} &\vline &h_{n_1+1n_1+1}^{r} &  h_{n_1+1n_1+2}^{r}& 0 & \cdots & 0_{n_1+1n}\\
    \vdots &\ddots & \ddots & \vdots  &\vline &h_{n_1+2n_1+1}^{r} & -h_{n_1+1n_1+1}^{r} & 0 & \cdots & \vdots \\
  \vdots &\ddots  &\ddots & \vdots &\vline& 0 & 0 & 0 & \cdots & \vdots \\
  \vdots &\ddots  &\ddots & \vdots & \vline & \vdots & \vdots & 0 &\ddots &0\\
    0_{n1} &\cdots &\cdots & 0_{nn_1} &\vline &0_{nn_1+1} & 0 & \cdots&0 & 0 
 \end{pmatrix}
.$$
\item[$(\grave{iii})$] If the equality of $(i)$ or $(ii)$ holds, then $N_1\times_fN_2$ is mixed totally geodesic in $\tilde M^m(c)$. Moreover, $N_1\times_fN_2$ is both $\mathfrak{D}_1$-minimal and $\mathfrak{D}_2$-minimal. Thus, $N_1\times_fN_2$ is a minimal warped product submanifold in $\tilde M^m(c)$. 
\end{itemize}
\end{theorem}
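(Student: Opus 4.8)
The plan is to run the Gauss-equation argument behind the classical Chen inequality (Theorem~\ref{pinc}), but to arrange the second fundamental form so that the warped-product splitting $T_xM^n=T_xN_1\oplus T_xN_2$ survives to the very end. I would establish (i) in full and then obtain (ii) by interchanging the roles of the two factors. First I would fix $x$ and choose an orthonormal frame of $T_xM^n$ adapted to the splitting, with $\{e_1,\dots,e_{n_1}\}$ tangent to $N_1$ and $\{e_{n_1+1},\dots,e_n\}$ tangent to $N_2$, arranging that $e_1,e_2$ span the plane $\pi_1\subset T_xN_1$ realizing the infimum in $\delta_{N_1^{n_1}}(x)$, together with a normal frame $\{e_{n+1},\dots,e_m\}$ whose first vector $e_{n+1}$ points along $\vec H$. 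Feeding the space-form curvature tensor \eqref{F1} into the Gauss equation \eqref{14} lets me write $K(\pi_1)$ and the scalar curvature $\tau(T_xN_1)$ purely in terms of $c$ and the coefficients $h_{ij}^r$.

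The algebraic heart is Lemma~\ref{48}. Applied to the $n$ numbers $\alpha_i=h_{ii}^{n+1}$, with $\beta$ read off from \eqref{djnrg4654ygvf} so that the hypothesis $(\sum\alpha_i)^2=(n-1)(\sum\alpha_i^2+\beta)$ holds, it gives $2h_{11}^{n+1}h_{22}^{n+1}\ge\beta$. Substituting this into the Gauss expression for $K(\pi_1)$ produces a lower bound for $K(\pi_1)$ in terms of $\tau(T_xM^n)$, $\|\vec H\|^2$, $c$, and a collection of leftover quadratic terms in the $h_{ij}^r$ — and that collection is exactly the left-hand side of Lemma~\ref{djei93}. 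Invoking Lemmas~\ref{djei93} and \ref{kdiwi84cb} in succession rewrites it as a sum of manifestly non-negative squares together with the mixed term $\sum_r\sum_a\sum_A (h_{aA}^r)^2=\|h(\mathfrak D_1,\mathfrak D_2)\|^2$, so discarding them only strengthens the estimate.

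Finally I would insert the warped-product decomposition $\tau(T_xM^n)=\tfrac{n_2\Delta f}{f}+\tau(T_xN_1)+\tau(T_xN_2)$, which follows from \eqref{24} and \eqref{16}, together with the space-form values such as $\tilde\tau(T_xM^n)=\binom{n}{2}c$, to eliminate $\tau(T_xM^n)$, to introduce the term $-\tfrac{n_2\Delta f}{f}$, and to collapse the curvature constants (using $\binom{n}{2}-\binom{n_1}{2}-n_1n_2=\binom{n_2}{2}$) into $\tfrac12 n_1(n_1+2n_2-1)c-c$. Rearranging so that $\delta_{N_1^{n_1}}(x)=\tau(T_xN_1)-K(\pi_1)$ stands alone then yields \eqref{10073}.

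For the equality discussion I would trace back which inequalities were used. Equality in Lemma~\ref{48} forces $h_{11}^{n+1}+h_{22}^{n+1}=h_{33}^{n+1}=\cdots=h_{nn}^{n+1}$, which is precisely the relation $\mu=\mu_1+\mu_2$ recorded on the diagonal of $A_{e_{n+1}}$; equality in discarding the non-negative leftover terms forces each square to vanish, i.e. all mixed coefficients $h_{aA}^r$ vanish (mixed total geodesy), all couplings outside the $\{1,2\}$ block vanish, and each $A_{e_r}$ with $r\ge n+2$ becomes the trace-free $2\times2$ form displayed in the statement. Vanishing of the $\mathfrak D_1$- and $\mathfrak D_2$-diagonal sums then gives $\mathfrak D_1$- and $\mathfrak D_2$-minimality, hence $\vec H=0$, which is conclusion ($\grave{iii}$). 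I expect the main obstacle to be exactly this equality bookkeeping: keeping the index ranges straight through Lemmas~\ref{djei93} and \ref{kdiwi84cb} so that the vanishing conditions reassemble into the precise block shape-operator matrices, and verifying that the constant $\tfrac12 n_1(n_1+2n_2-1)c-c$ emerges correctly from the space-form scalar curvatures under the $N_1$–$N_2$ splitting.
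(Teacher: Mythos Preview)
Your outline follows the classical Chen template correctly up to and including the use of Lemma~\ref{djei93}, but the crucial step diverges from the paper in a way that creates a real gap. You propose to apply Lemma~\ref{48} to the $n$ numbers $\alpha_i=h_{ii}^{n+1}$, $i=1,\dots,n$, with the factor $(n-1)$. The paper instead applies it to only the $n_1$ first-factor diagonal entries $\alpha_a=h_{aa}^{n+1}$, $a=1,\dots,n_1$, with the factor $(n_1-1)$, after defining a modified quantity $\Upsilon_1$ that already subtracts the $N_2$ trace contributions $(\sum_A h_{AA}^{n+1})^2$ and the cross term $2\sum_a\sum_A h_{aa}^{n+1}h_{AA}^{n+1}$ (see \eqref{dme}). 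This choice is what makes the argument close.

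With your choice, after inserting $\tau(T_xM^n)=\tfrac{n_2\Delta f}{f}+\tau(T_xN_1)+\tau(T_xN_2)$ you are left with $-\tau(T_xN_2)$ on the right-hand side, and this is the \emph{intrinsic} scalar curvature of the fiber, not the ambient value $\binom{n_2}{2}c$. Your sentence about ``collapsing the curvature constants'' using $\binom{n}{2}-\binom{n_1}{2}-n_1n_2=\binom{n_2}{2}$ implicitly replaces $\tau(T_xN_2)$ by $\binom{n_2}{2}c$, which is false in general. The paper handles this by applying Gauss a second time to $\tau(T_xN_2)$ (equation \eqref{5thinequ}), producing $\tilde\tau(T_xN_2)=\binom{n_2}{2}c$ plus second-fundamental-form terms; these extrinsic terms then cancel against the leftover $N_2$ pieces coming from the expansion, and a further non-negative term $\tfrac{1}{2}\sum_r(\sum_A h_{AA}^r)^2$ is discarded. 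Your route also produces the coefficient $\tfrac{n^2(n-2)}{2(n-1)}$ on $\|\vec H\|^2$ rather than the stated $\tfrac{n^2}{2}$.

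The equality analysis confirms the discrepancy. Applying Lemma~\ref{48} to all $n$ numbers forces $h_{11}^{n+1}+h_{22}^{n+1}=h_{33}^{n+1}=\cdots=h_{nn}^{n+1}$, i.e.\ \emph{every} diagonal entry from index $3$ through $n$ equals $\mu$. But the shape operator displayed in ($\grave{i}$) leaves the $N_2$ block diagonal entries $h_{AA}^{n+1}$ completely free; only the $N_1$ block carries the $\mu$ structure. That is exactly what the paper's version of the lemma (with $n_1$ entries) gives: $h_{11}^{n+1}+h_{22}^{n+1}=h_{33}^{n+1}=\cdots=h_{n_1n_1}^{n+1}$. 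The $\mathfrak D_1$- and $\mathfrak D_2$-minimality in ($\grave{iii}$) then arise from the separately discarded terms $\tfrac{1}{2(n_1-1)}(\sum_a h_{aa}^{n+1})^2$ and $\tfrac{1}{2}\sum_r(\sum_A h_{AA}^r)^2$, neither of which appears in your setup.
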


\begin{proof}
For $x\in M^n$, let $\pi_1\subset T_xN_1$ be a $2$-plane. We choose an orthonormal basis $\{e_1, \cdots, e_{n_1}, e_{n_1+1}, \cdots, e_{n}\}$ of $T_xM^n$, where $\{e_1, \cdots, e_{n_1}\}$ is an orthonormal basis for $T_xN_1$ and $\{e_{n_1}, e_{n_1+1}, \cdots, e_{n}\}$ is for $T_xN_2$. Hence, $\{e_{n+1}, \cdots, e_m\}$ is an orthonormal basis of $T_x^\perp M^n$. First, put $\pi_1 = Span \{e_1, e_2\}$ such that the normal vector $e_{n+1}$ is in the direction of the mean curvature vector $\vec H$. By $(\ref{12})$ and $(\ref{F1})$ we have
\begin{equation}
n^2 ||\vec H||^2= 2 \tau (T_xM^n)+ ||h||^2-n(n-1)c.\label{}
\end{equation}
Equivalently, 
$$\biggl(\sum_{a=1}^{n_1}h_{aa}^{n+1}\biggr)^2=2\tau (T_xM^n)+||h||^2-n(n-1)c-\biggl(\sum_{A=n_1+1}^nh_{AA}^{n+1}\biggr)^2-2\sum_{a=1}^{n_1}~\sum_{A=n_1+1}^nh_{aa}^{n+1}h_{AA}^{n+1}.$$

Putting 
\begin{equation}\label{dme}
\Upsilon_1 = 2\tau (T_xM^n)- \frac{n_1-2}{n_1-1}  \biggl(\sum_{a=1}^{n_1}h_{aa}^{n+1}\biggr)^2~~~~~~~~~~~~~~~~~~~~~~~~~~~~~~~~$$$$-\biggl(\sum_{A=n_1+1}^nh_{AA}^{n+1}\biggr)^2-2\sum_{a=1}^{n_1}\sum_{A=n_1+1}^nh_{aa}^{n+1}h_{AA}^{n+1}-n(n-1)c.\label{}
\end{equation}

Thus, from the above two equations we may write
\numberwithin{equation}{section}
\begin{equation}
\biggl(\sum_{a=1}^{n_1}h_{aa}^{n+1}\biggr)^2= (n_1-1) \biggl(\Upsilon_1 + ||h||^2\biggr),\label{}
\end{equation}
i.e.,
\numberwithin{equation}{section}
\begin{equation}
\biggl(\sum_{a=1}^{n_1} h_{aa}^{n+1}\biggr)^2= (n_1-1)\biggl (\Upsilon_1 + \sum_{a=1}^{n_1} (h_{aa}^{n+1})^2+\sum_{A=n_1+1}^{n} (h_{AA}^{n+1})^2$$$$~~~~~~~~~~+\sum_{\substack{i,j=1\\i\neq j}}^n(h_{ij}^{n+1})^2+\sum_{r=n+2}^{m}~\sum_{i,j=1}^n(h_{ij}^r)^2\biggr).\label{e}
\end{equation}

Applying Lemma \ref{48} on the above equation for
$$\alpha_a=h_{aa}^{n+1},~~~ \forall~ a\in \{1, \cdots, n_1\}$$
and 
$$\beta=\Upsilon_1 +\sum_{A=n_1+1}^{n} (h_{AA}^{n+1})^2 +\sum_{\substack{i,j=1\\i\neq j}}^n(h_{ij}^{n+1})^2+\sum_{r=n+2}^{m}~\sum_{i,j=1}^n(h_{ij}^r)^2,$$
then we derive 
\begin{equation}\label{1stinequ}
h_{11}^{n+1} h_{22}^{n+1}\ge \frac{1}{2}\biggl (\Upsilon_1 +\sum_{A=n_1+1}^{n} (h_{AA}^{n+1})^2+\sum_{\substack{i,j=1\\i\neq j}}^n(h_{ij}^{n+1})^2+\sum_{r=n+2}^{m}~\sum_{i,j=1}^n(h_{ij}^r)^2\biggr).
\end{equation}

From $(\ref{F1})$ and $(\ref{12})$ we also have
$$K(\pi_1)=c+\sum_{r=n+1}^m \biggl(h_{11}^rh_{22}^r-(h_{12}^r)^2\biggr).$$

Therefore, by combining the above two relations together, we get
$$K(\pi_1)\ge c +\frac{1}{2}\Upsilon_1+\frac{1}{2}\sum_{A=n_1+1}^{n} (h_{AA}^{n+1})^2 + \sum_{r=n+2}^m h_{11}^rh_{22}^r-\sum_{r=n+1}^m(h_{12}^r)^2 $$$$+\frac{1}{2}\sum_{\substack{i,j=1\\i\neq j}}^n(h_{ij}^{n+1})^2+\frac{1}{2}\sum_{r=n+2}^{m}~\sum_{i,j=1}^n(h_{ij}^r)^2.$$

From Lemma \ref{djei93}, it is obvious that the above inequality is identical to 
$$K(\pi_1)\ge c +\frac{1}{2}\Upsilon_1  +\frac{1}{2}\sum_{\substack {i,j=3\\i\neq j}}^n(h_{ij}^{n+1})^2+\frac{1}{2}\sum_{r=n+2}^{m}~\sum_{i,j=3}^n(h_{ij}^r)^2$$$$+\frac{1}{2}\sum_{r=n+2}^m(h_{11}^r+h_{22}^r)^2+\sum_{r=n+1}^m~\sum_{j=3}^n\biggl((h_{1j}^r)^2+ (h_{2j}^r)^2\biggr)+\frac{1}{2}\sum_{A=n_1+1}^{n} (h_{AA}^{n+1})^2.$$

Hence, the above inequality yields to
\begin{equation}\label{2ndinequ}
K(\pi_1)\ge c +\frac{1}{2}\Upsilon_1+\frac{1}{2}\sum_{\substack {i,j=3\\i\neq j}}^n(h_{ij}^{n+1})^2+\frac{1}{2}\sum_{r=n+2}^{m}~\sum_{i,j=3}^n(h_{ij}^r)^2+\frac{1}{2}\sum_{A=n_1+1}^{n} (h_{AA}^{n+1})^2.
\end{equation}

By $(\ref{dme})$ and the above equation, we obtain
$$K(\pi_1)\ge c+\tau (T_xM^n)+\frac{1}{2(n_1-1)}{\biggl(\sum_{a=1}^{n_1} h_{aa}^{n+1}\biggr)^2}-\frac{n^2}{2}||\vec H||^2-\frac{1}{2}n(n-1)c$$$$+\frac{1}{2}\sum_{\substack {i,j=3\\i\neq j}}^n(h_{ij}^{n+1})^2+\frac{1}{2}\sum_{r=n+2}^{m}~\sum_{i,j=3}^n(h_{ij}^r)^2+\frac{1}{2}\sum_{A=n_1+1}^{n} (h_{AA}^{n+1})^2.$$

The above equation can be written as
\begin{equation}\label{4thinequ}
\tau_1 (T_xN_1)-K(\pi_1)\le \frac{n^2}{2} ||\vec H||^2-\frac{n_2 \Delta f}{f}+(\frac{n^2}{2}-\frac{n}{2}-1)c-\tau_2(T_xN_2)$$$$-\frac{1}{2}\sum_{\substack {i,j=3\\i\neq j}}^n(h_{ij}^{n+1})^2-\frac{1}{2}\sum_{r=n+2}^{m}~\sum_{i,j=3}^n(h_{ij}^r)^2-\frac{1}{2}\sum_{A=n_1+1}^{n} (h_{AA}^{n+1})^2.
\end{equation}

Applying the Gauss equation on $\tau_2(T_xN_2)$, gives
\begin{equation}\label{5thinequ}
-\tau_2(T_xN_2)=-\tilde \tau_2(T_xN_2)+\frac{1}{2}\sum_{r=n+1}^m\sum_{A,B=n_1+1}^n(h_{AB}^r)^2-\frac{1}{2}\sum_{r=n+1}^m (h_{n_1+1n_1+1}^r+\cdots +h_{nn}^r)^2.
\end{equation}

In view of the above two relations, we can write
\begin{equation}\label{vbg}
\tau_1 (T_xN_1)-K(\pi_1)\le \frac{n^2}{2} ||\vec H||^2-\frac{n_2 \Delta f}{f}+(\frac{n^2}{2}-\frac{n}{2}-1)c-\tilde\tau_2(T_xN_2)$$$$-\frac{1}{2}\sum_{\substack {i,j=3\\i\neq j}}^n(h_{ij}^{n+1})^2-\frac{1}{2}\sum_{r=n+2}^{m}~\sum_{i,j=3}^n(h_{ij}^r)^2-\frac{1}{2}\sum_{A=n_1+1}^{n} (h_{AA}^{n+1})^2+\frac{1}{2}\sum_{r=n+1}^m\sum_{A,B=n_1+1}^n(h_{AB}^r)^2.
\end{equation}

Now, it clear that $(\ref{vbg})$ is equivalent to the following 
\begin{equation}\label{3rdinequ} 
\tau_1 (T_xN_1)-K(\pi_1)\le \frac{n^2}{2} ||\vec H||^2-\frac{n_2 \Delta f}{f}+(\frac{n^2}{2}-\frac{n}{2}-1)c-\tilde\tau_2(T_xN_2)$$$$-\frac{1}{2}\sum_{r=n+2}^m\sum_{a,b=3}^{n_1}(h_{ab}^{r})^2-\sum_{r=n+2}^m\sum_{a=3}^{n_1}\sum_{A=n_1+1}^{n}(h_{aA}^{r})^2$$$$-\frac{1}{2}\sum_{\substack {a,b=3\\a\neq b}}^{n_1}(h_{ab}^{n+1})^2-\sum_{a=3}^{n_1}\sum_{A=n_1+1}^{n}(h_{aA}^{n+1})^2.
\end{equation}

Hence, the inequality in $(i)$ follows directly from the above one.

If $\pi_2 \subset T_xN_2$, then put $\pi_2=Span\{e_{n_1+1}, e_{n_1+2}\}$. Now, following similar analogy like the first case, we can write
$$\biggl(\sum_{A=n_1+1}^{n}h_{AA}^{n+1}\biggr)^2=2\tau (T_xM^n)+||h||^2-n(n-1)c-\biggl(\sum_{a=1}^{n_1}h_{aa}^{n+1}\biggr)^2-2\sum_{a=1}^{n_1}~\sum_{A=n_1+1}^nh_{aa}^{n+1}h_{AA}^{n+1}.$$
Putting 
\begin{equation}\label{}
\Upsilon_2 = 2\tau (T_xM^n)- \frac{n_2-2}{n_2-1}  \biggl(\sum_{A=n_1+1}^{n}h_{AA}^{n+1}\biggr)^2
~~~~~~~~~~~~~~~~~~~~~~~~~~~~~~~~
$$$$-\biggl(\sum_{a=1}^{n_1}h_{aa}^{n+1}\biggr)^2
-2\sum_{a=1}^{n_1}\sum_{A=n_1+1}^nh_{aa}^{n+1}h_{AA}^{n+1}-n(n-1)c.\label{}
\end{equation}

Thus, from the above two equations we may write
\numberwithin{equation}{section}
\begin{equation}
\biggl(\sum_{A=n_1+1}^{n}h_{AA}^{n+1}\biggr)^2= (n_2-1) \biggl(\Upsilon_2 + ||h||^2\biggr),\label{}
\end{equation}
i.e.,
\numberwithin{equation}{section}
\begin{equation}
\biggl(\sum_{A=n_1+1}^{n}h_{AA}^{n+1}\biggr)^2= (n_2-1)\biggl (\Upsilon_2 + \sum_{a=1}^{n_1} (h_{aa}^{n+1})^2+\sum_{A=n_1+1}^{n} (h_{AA}^{n+1})^2$$$$~~~~~~~~~~+\sum_{\substack{i,j=1\\i\neq j}}^n(h_{ij}^{n+1})^2+\sum_{r=n+2}^{m}~\sum_{i,j=1}^n(h_{ij}^r)^2\biggr).\label{}
\end{equation}

Applying Lemma \ref{48} on the above equation for
$$\alpha_a=h_{AA}^{n+1},~~~ \forall~ a\in \{n_1+1, \cdots, n\}$$
and 
$$\beta=\Upsilon_2 +\sum_{a=1}^{n_1} (h_{aa}^{n+1})^2 +\sum_{\substack{i,j=1\\i\neq j}}^n(h_{ij}^{n+1})^2+\sum_{r=n+2}^{m}~\sum_{i,j=1}^n(h_{ij}^r)^2,$$
then we derive 
\begin{equation}\label{}
h_{n_1+1n_1+1}^{n+1} h_{n_1+2n_1+2}^{n+1}\ge \frac{1}{2}\biggl (\Upsilon_2 +\sum_{a=1}^{n_1} (h_{aa}^{n+1})^2+\sum_{\substack{i,j=1\\i\neq j}}^n(h_{ij}^{n+1})^2+\sum_{r=n+2}^{m}~\sum_{i,j=1}^n(h_{ij}^r)^2\biggr).
\end{equation}

From $(\ref{F1})$ and $(\ref{12})$ we also have
$$K(\pi_2)=c+\sum_{r=n+1}^m \biggl(h_{n_1+1n_1+1}^rh_{n_1+2n_1+2}^r-(h_{n_1+1n_1+2}^r)^2\biggr).$$

Therefore, by combining the above two relations we reach
$$K(\pi_2)\ge c + \sum_{r=n+2}^m h_{n_1+1n_1+1}^rh_{n_1+2n_1+2}^r-\sum_{r=n+1}^m(h_{n_1+1n_1+2}^r)^2
+\frac{1}{2}\Upsilon_2 $$$$+\frac{1}{2}\sum_{a=1}^{n_1} (h_{aa}^{n+1})^2 +\frac{1}{2}\sum_{\substack{i,j=1\\i\neq j}}^n(h_{ij}^{n+1})^2+\frac{1}{2}\sum_{r=n+2}^{m}~\sum_{i,j=1}^n(h_{ij}^r)^2.$$

Now, following similar procedure as the first case, the inequality of statement $(ii)$ follows immediately.

For the equality case, we also distinguish two cases based on whether the $2$-plane $\pi_i$ is tangent to the first factor or to the second. In statement $(\grave{i})$, we consider $\pi_1\subset T_xN_1$, then the equality holds if and only if all equalities of 
$(\ref{1stinequ})$, $(\ref{2ndinequ})$, $(\ref{4thinequ})$, $(\ref{5thinequ})$ and $(\ref{3rdinequ})$ hold. One can see that these equalities hold if and only if the following conditions are satisfied, respectively. 
\begin{itemize}
\item[(i)]$h_{11}^{n+1}+h_{22}^{n+1}=h_{33}^{n+1}=\cdots =h_{n_1n_1}^{n+1},$

\item[(ii)]$\displaystyle\sum_{r=n+2}^m(h_{11}^r+h_{22}^r)^2+\sum_{r=n+1}^m~\sum_{j=3}^n\biggl((h_{1j}^r)^2+ (h_{2j}^r)^2\biggr)=0,$

\item[(iii)]$\displaystyle\biggl(\sum_{a=1}^{n_1} h_{aa}^{n+1}\biggr)^2=\sum_{r=n+1}^m (h_{n_1+1n_1+1}^r+\cdots +h_{nn}^r)^2=0,$

\item[(iv)]$\displaystyle\sum_{r=n+2}^m\sum_{a,b=3}^{n_1}(h_{ab}^{r})^2+\sum_{r=n+2}^m\sum_{a=3}^{n_1}\sum_{A=n_1+1}^{n}(h_{aA}^{r})^2\\~~~~~~~~~~~~~~~~~~~~~~~~~~~~+\sum_{\substack {a,b=3\\a\neq b}}^{n_1}(h_{ab}^{n+1})^2+\sum_{a=3}^{n_1}\sum_{A=n_1+1}^{n}(h_{aA}^{n+1})^2=0.$
\end{itemize}

From condition $(iii)$, it is clear that $N_1\times_fN_2$ is both $\mathfrak{D}_1$-minimal and $\mathfrak{D}_2$-minimal warped product submanifold in $\tilde M^m(c)$. This implies that $N_1\times_fN_2$ is minimal in $\tilde M^m(c)$. 

Now, we are going to classify the other conditions in two categories, according to the normal vector field $r$. Firstly, if $r=n+1$, then we have
$$h_{11}^{n+1}+h_{22}^{n+1}=h_{33}^{n+1}=\cdots =h_{n_1n_1}^{n+1},$$
and
$$\sum_{j=3}^n h_{1j}^{n+1}=\sum_{j=3}^n h_{2j}^{n+1}=
\sum_{\substack {a,b=3\\a\neq b}}^{n_1}h_{ab}^{n+1}=\sum_{a=3}^{n_1}\sum_{A=n_1+1}^{n}h_{aA}^{n+1}=0.$$
Equivalently,
$$A_{e_{n+1}} =
 \begin{pmatrix}
  \mu_1 & h_{12}^{n+1} & 0 & \cdots & 0_{1n_1} & \vline & 0_{1n_1+1} & \cdots & 0_{1n} \\
  h_{21}^{n+1} & \mu_2 & 0 & \cdots & \vdots & \vline & \vdots & \cdots & \vdots  \\
  0 & 0 & \mu & \cdots & \vdots & \vline& \vdots &  \cdots& \vdots \\
  \vdots & \vdots & \vdots & \ddots & \vdots & \vline & \vdots & \cdots & \vdots \\
  0_{n_11} & 0 & 0 & \cdots & \mu & \vline & 0_{n_1n_1+1} & \cdots & 0_{n_1n} \\
\hline
  0_{n_1+11} & \cdots & \cdots & \cdots & 0_{n_1+1n_1} & \vline & h^{n+1}_{n_1+1n_1+1} & \cdots & h^{n+1}_{n_1+1n} \\
  \vdots & \ddots & \ddots & \ddots & \vdots & \vline & \vdots & \ddots & \vdots \\
  0_{n1} & \cdots & \cdots & \cdots & 0_{nn_1} & \vline & h^{n+1}_{nn_1+1} & \cdots & h^{n+1}_{nn} 
 \end{pmatrix}
,$$
$\mu=\mu_1+\mu_2.$

Secondly, if $r\in \{n+2, \cdots, m\}$, then the conditions above imply
$$h_{11}^r+h_{22}^r=\sum_{j=3}^n h_{1j}^r=\sum_{j=3}^nh_{2j}^r=\sum_{a,b=3}^{n_1}h_{ab}^{r}
=\sum_{a=3}^{n_1}\sum_{A=n_1+1}^{n}h_{aA}^{r}=0.$$

Equivalently,
$$A_{e_r} =
 \begin{pmatrix}
  h_{11}^{r} & h_{12}^{r} & 0 & \cdots & 0_{1n_1} & \vline & 0_{1n_1+1} & \cdots & 0_{1n} \\
  h_{21}^{r} & - h_{11}^{r} & 0 & \cdots & \vdots & \vline & \vdots & \cdots & \vdots  \\
  0 & 0 & 0_{33} & \cdots & \vdots & \vline& \vdots & \cdots & \vdots \\
  \vdots & \vdots & \vdots & \cdots & \vdots & \vline & \vdots &\cdots  & \vdots \\
  0_{n_11} & 0 & 0 & \cdots & 0_{n_1n_1} & \vline & 0_{n_1n_1+1} & \cdots & 0_{n_1n} \\
\hline
  0_{n_1+11} & \cdots & \cdots & \cdots & 0_{n_1+1n_1} & \vline & h^r_{n_1+1n_1+1} & \cdots & h^r_{n_1+1n} \\
  \vdots & \ddots & \ddots &\ddots  & \vdots & \vline & \vdots & \ddots & \vdots \\
  0_{n1} & \cdots & \cdots & \cdots &0_{nn_1} & \vline & h^r_{nn_1+1} & \cdots & h^r_{nn} 
 \end{pmatrix}.
$$

Obviously, the above two matrices show that $N_1\times _fN_2$ is mixed totally geodesic submanifold in $\tilde M^m(c)$.
 
Analogously, the equality sign in $(\grave{ii})$ holds if and only if the following are satisfied
$$A_{e_{n+1}} = \begin{pmatrix}
 h^{n+1}_{11} & \cdots& \cdots & h^{n+1}_{1n_1} &\vline& 0_{1n_1+1} & \cdots & \cdots & \cdots &0_{1n}\\
\vdots&\ddots&&\vdots &\vline&\vdots&\ddots&\ddots&\ddots&\vdots\\
   \vdots & &\ddots & \vdots &\vline &\vdots & \ddots &\ddots  &\ddots  &\vdots\\
   h^{n+1}_{n_11} & \cdots&\cdots & h^{n+1}_{n_1n_1} &\vline& 0_{n_1n_1+1} & \cdots & \cdots & \cdots & 0_{n_1n}\\
\hline
   0_{n_1+11} & \cdots& \cdots & 0_{n_1+1n_1} &\vline &\mu_1 & h^{n+1}_{n_1+1n_1+2} & 0 & \cdots & 0_{n_1+1n}\\
    \vdots &\ddots & \ddots & \vdots  &\vline &h^{n+1}_{n_1+2n_1+1} & \mu_2 & 0 & \cdots & \vdots \\
  \vdots &\ddots  &\ddots & \vdots &\vline& 0 & 0 & \mu & \cdots & \vdots \\
  \vdots &\ddots  &\ddots & \vdots & \vline & \vdots & \vdots & 0 &\ddots &0\\
    0_{n1} &\cdots &\cdots & 0_{nn_1} &\vline &0_{nn_1+1} & 0 & \cdots&0 & \mu 
 \end{pmatrix}
,$$
where $\mu=\mu_1+\mu_2.$

Also,
$$A_{e_{r}}=
 \begin{pmatrix}
 h^r_{11} & \cdots& \cdots & h^r_{1n_1} &\vline& 0_{1n_1+1} & \cdots & \cdots & \cdots &0_{1n}\\
\vdots&\ddots&&\vdots &\vline&\vdots&\ddots&\ddots&\ddots&\vdots\\
   \vdots & &\ddots & \vdots &\vline &\vdots & \ddots &\ddots  &\ddots  &\vdots\\
   h^r_{n_11} & \cdots&\cdots & h^r_{n_1n_1} &\vline& 0_{n_1n_1+1} & \cdots & \cdots & \cdots & 0_{n_1n}\\
\hline
   0_{n_1+11} & \cdots& \cdots & 0_{n_1+1n_1} &\vline &h_{n_1+1n_1+1}^{r} &  h_{n_1+1n_1+2}^{r}& 0 & \cdots & 0_{n_1+1n}\\
    \vdots &\ddots & \ddots & \vdots  &\vline &h_{n_1+2n_1+1}^{r} & -h_{n_1+1n_1+1}^{r} & 0 & \cdots & \vdots \\
  \vdots &\ddots  &\ddots & \vdots &\vline& 0 & 0 & 0 & \cdots & \vdots \\
  \vdots &\ddots  &\ddots & \vdots & \vline & \vdots & \vdots & 0 &\ddots &0\\
    0_{n1} &\cdots &\cdots & 0_{nn_1} &\vline &0_{nn_1+1} & 0 & \cdots&0 & 0 
 \end{pmatrix}
.$$

Clearly, $N_1\times_fN_2$ is mixed totally geodesic in $\tilde M^m(c)$. Also, it is not difficult to show that $N_1\times_fN_2$ is both $\mathfrak{D}_1$-minimal and $\mathfrak{D}_2$-minimal, which implies the minimality of $N_1\times_fN_2$ in $\tilde M^m(c)$.
\end{proof}

\section{ Answer to Chern's problem: Finding the necessary condition for warped products to be Minimal}

As answers to Problem \ref{prob6}, we therefore apply the above result (i.e., $(\grave{iii})$ from \textit{Theorem 4.1}), which give a necessary condition for a warped product submanifold to be minimal in a Riemannian space form. 
\\
So the first answer is:

\begin{corollary}\label{con11}
Let $\varphi :M^n=N_1\times _fN_2 \longrightarrow \tilde M^m(c)$ be an isometric immersion of a warped product submanifold $M^n$ into a Riemannian space form $\tilde M^m(c)$. Then, for each point $x\in M^n$ and $\pi_1 \subset T_xN_1$, we have:\\ 
\begin{equation}\label{10073}
\delta_{N_1^{n_1}}(x)+\frac{n_2 \Delta f}{f}\le \frac{1}{2}n_1(n_1+2n_2-1)c-c,
\end{equation}
\\
and if the equality holds, then $\varphi$ is minimal.

\end{corollary}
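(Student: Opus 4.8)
The plan is to obtain the corollary as a specialisation of the Chen first inequality established in Theorem \ref{315}(i), exploiting that its right-hand side involves the extrinsic geometry only through the nonnegative term $\frac{n^2}{2}\|\vec H\|^2$. Since Chern's Problem \ref{prob6} asks precisely for an intrinsic obstruction to the existence of a minimal isometric immersion, the natural move is to read off what Theorem \ref{315}(i) says once this extrinsic term is forced to vanish.

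First I would recall Theorem \ref{315}(i),
$$\delta_{N_1^{n_1}}(x)\le \frac{n^2}{2}\|\vec H\|^2-\frac{n_2\Delta f}{f}+\frac{1}{2}n_1(n_1+2n_2-1)c-c,$$
and impose the minimality hypothesis $\vec H=0$. The term $\frac{n^2}{2}\|\vec H\|^2$ then disappears, and transposing $\frac{n_2\Delta f}{f}$ to the left-hand side yields exactly the asserted bound
$$\delta_{N_1^{n_1}}(x)+\frac{n_2\Delta f}{f}\le \frac{1}{2}n_1(n_1+2n_2-1)c-c.$$
Read contrapositively, this is the desired partial answer to Problem \ref{prob6}: if this inequality is violated at even one point $x\in M^n$, then $N_1\times_f N_2$ cannot be minimally and isometrically immersed in $\tilde M^m(c)$.

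For the rigidity clause I would not argue from the reduced inequality directly but instead invoke part $(\grave{iii})$ of Theorem \ref{315}. There it is already shown that equality in the Chen first inequality forces the shape operators into the normal forms exhibited in $(\grave{i})$, making $N_1\times_f N_2$ simultaneously $\mathfrak{D}_1$-minimal and $\mathfrak{D}_2$-minimal, hence minimal; this is precisely the statement that equality implies $\varphi$ is minimal, and it requires no new computation beyond citing Theorem \ref{315}.

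The step I expect to be the genuine obstacle is this last one, and the obstacle is logical rather than computational. Equality in the \emph{reduced} inequality does not by itself feed back into Theorem \ref{315}(i): substituting it back produces only the tautology $0\le \frac{n^2}{2}\|\vec H\|^2$, so the discarded nonnegative term cannot be recovered from the reduced bound alone. Consequently the equality clause must be understood as equality in the full inequality of Theorem \ref{315}(i), where $(\grave{iii})$ applies and already yields $\vec H=0$. Making this identification precise—so that the rigidity is genuinely inherited from Theorem \ref{315} rather than manufactured by dropping $\frac{n^2}{2}\|\vec H\|^2$—is the one point that needs care.
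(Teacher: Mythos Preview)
Your approach matches the paper's: the paper gives no separate proof for this corollary but simply states that it follows by applying Theorem~\ref{315}, in particular part~$(\grave{iii})$, which is exactly what you do. Your discussion of the logical subtlety in the equality clause---that equality in the reduced inequality does not by itself recover $\vec H=0$, and must instead be read as equality in the full inequality of Theorem~\ref{315}(i)---is more careful than the paper's own treatment, which glosses over this point entirely.
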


\begin{remark}

 Consider any Riemannian manifold, a corresponding answer for Chern's problem can be obtained.

\end{remark}

The second answer for Chern's problem is:

\begin{corollary}\label{con22}
Let $\varphi :M^n=N_1\times _fN_2 \longrightarrow \tilde M^m(c)$ be an isometric immersion of a warped product submanifold $M^n$ into a Riemannian space form $\tilde M^m(c)$. Then, for each point $x\in M^n$ and $\pi_2 \subset T_xN_2$, we have: \\
 
\begin{equation}\label{}
\delta_{N_2^{n_2}}(x)+  \frac{n_2 \Delta f}{f}\le \frac{1}{2}n_2(n_2+2n_1-1)c-c,
\end{equation}
\\
and if the equality holds, then $\varphi$ is minimal.

\end{corollary}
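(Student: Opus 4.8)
The plan is to read this corollary off directly from part~(ii) of Theorem~\ref{315}, where all the analytic work has already been done. First I would recall the bound established there,
\begin{equation*}
\delta_{N_2^{n_2}}(x)\le \frac{n^2}{2}\,\|\vec H\|^2-\frac{n_2\,\Delta f}{f}+\frac{1}{2}n_2(n_2+2n_1-1)c-c,
\end{equation*}
and transpose the warping term to the left to obtain
\begin{equation*}
\delta_{N_2^{n_2}}(x)+\frac{n_2\,\Delta f}{f}\le \frac{n^2}{2}\,\|\vec H\|^2+\frac{1}{2}n_2(n_2+2n_1-1)c-c,
\end{equation*}
so that the only extrinsic ingredient surviving on the right is the nonnegative quantity $\frac{n^2}{2}\|\vec H\|^2$.

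Next I would tie this to Chern's problem. Since Problem~\ref{prob6} asks for necessary conditions under which $N_1\times_fN_2$ carries a \emph{minimal} isometric immersion, the pertinent case is $\vec H=0$; this annihilates the mean-curvature term and leaves exactly the asserted inequality, which thereby records the purely intrinsic constraint the warped metric must satisfy for such an immersion to exist. For the equality clause I would appeal to parts $(\grave{ii})$ and $(\grave{iii})$ of Theorem~\ref{315}: equality forces each intermediate estimate in the proof of part~(ii) to be sharp, pinning the shape operators $A_{e_{n+1}}$ and $A_{e_r}$ $(r\ge n+2)$ into the block forms displayed in $(\grave{ii})$, and the diagonal structure of those matrices shows $N_1\times_fN_2$ to be both $\mathfrak{D}_1$-minimal and $\mathfrak{D}_2$-minimal, hence minimal in $\tilde M^m(c)$ by $(\grave{iii})$.

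The computation is entirely inherited from Theorem~\ref{315}, so no genuine technical obstacle remains; the only points requiring care are conceptual. One must keep the two factors' roles straight---here it is the $N_2$-plane $\pi_2=\mathrm{Span}\{e_{n_1+1},e_{n_1+2}\}$ that is selected, so that the constant $\frac{1}{2}n_2(n_2+2n_1-1)c$ (and not its $n_1\leftrightarrow n_2$ counterpart) appears---and one must observe that discarding $\frac{n^2}{2}\|\vec H\|^2$ is licit precisely in the minimal regime distinguished by Chern's problem, which is also exactly what lends the equality statement its content.
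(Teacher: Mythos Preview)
Your argument is correct and follows exactly the route the paper intends: the corollary is stated without its own proof, and the surrounding text makes clear it is obtained by specializing Theorem~\ref{315}(ii) to $\vec H=0$ (to produce a necessary condition in the sense of Chern's Problem~\ref{prob6}) and invoking $(\grave{ii})$--$(\grave{iii})$ for the equality clause. You have in fact been more careful than the paper in flagging that the $\frac{n^2}{2}\|\vec H\|^2$ term can only be dropped in the minimal regime, which is the correct reading of the corollary as a \emph{necessary condition} for minimality rather than an inequality valid for arbitrary immersions.
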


\section{Research problems based on First Chen inequality}

Due to the results of this paper, we hypothesize the following open problems
\begin{problem}\label{ama1}
Prove the first Chen inequality for warped product submanifolds in complex space forms, Sasakian space forms and Kenmotsu space forms for examples.
\end{problem}

Secondly, we ask:
\begin{problem}\label{pqm2}
Give answers to Cheren's problem for ambient spaces in the previous remark.
\end{problem}

\vskip.15in
\begin{acknowledgements}
The author would like to thank the Palestine Technical University Kadoori, PTUK, for its supports to accomplish this work.
\end{acknowledgements}

\end{document}